\documentclass[12pt]{amsart}

\usepackage{amsmath,amsfonts,amssymb,amsthm,graphicx,overpic,hyperref,cleveref}
\usepackage{float,enumitem}
\usepackage{color,xcolor}
\usepackage{tikz}

\usepackage[margin=1.5in]{geometry}  

\numberwithin{equation}{section}

\newtheorem{thm}{Theorem}[section]
\newtheorem{prop}[thm]{Proposition}
\newtheorem{prp}[thm]{Proposition}
\newtheorem{cor}[thm]{Corollary}
\newtheorem{lem}[thm]{Lemma}

\theoremstyle{definition}

\newtheorem{defn}[thm]{Definition}

\theoremstyle{remark}
\newtheorem{rem}[thm]{Remark}


\newcommand{\nc}{\newcommand}
\nc{\dmo}{\DeclareMathOperator}
\usepackage{dsfont}

\nc{\abs}[1]{\left| #1 \right|}
\nc{\bigO}[1]{O\left(#1\right)}
\nc{\card}[1]{\left|#1\right|}
\nc{\ceil}[1]{\left\lceil #1 \right\rceil}
\nc{\CC}{\mathbb{C}}
\nc{\dilog}{\mathcal{L}}
\nc{\floor}[1]{\left\lfloor #1 \right\rfloor}
\nc{\ind}{\mathds{1}}
\nc{\ZZ}{\mathbb{Z}}
\nc{\len}[1]{\left| #1 \right|}
\dmo{\EL}{EL}
\nc{\littleo}[1]{o\left(#1\right)}
\dmo{\Mat}{Mat}
\nc{\norm}[1]{\left|\left| #1 \right|\right|}
\nc{\PP}{\mathbb{P}}
\nc{\NN}{\mathbb{N}}
\nc{\QQ}{\mathbb{Q}}
\nc{\RR}{\mathbb{R}}
\nc{\MF}{\mathcal{MF}}
\nc{\PMF}{\mathcal{PMF}}

\nc{\Bclose}{\overline{\mathcal{B}}}
\nc{\Bussp}{\mathcal{B}}

\nc{\st}[2]{\left\{\, #1 \,:\, #2\,\right\}}
\dmo{\supp}{supp}
\nc{\tr}[1]{\mathrm{tr}\left(#1\right)}
\nc{\what}{\widehat}
\dmo{\im}{Im}
\dmo{\re}{Re}
\nc{\eps}{\varepsilon}
\dmo{\li}{li}
\dmo{\chr}{chr}

\dmo{\arccosh}{arccosh}
\dmo{\arcsinh}{arcsinh}
\dmo{\area}{area}
\dmo{\conv}{conv}
\dmo{\diam}{diam}
\dmo{\DD}{\mathbb{D}}
\dmo{\dist}{\mathrm{d}}
\nc{\HH}{\mathbb{H}}
\dmo{\Isom}{Isom}
\dmo{\MCG}{MCG}
\dmo{\Mod}{\mathcal{M}}
\nc{\Sphere}{\mathbb{S}}
\dmo{\Teich}{\mathcal{T}}
\nc{\teichmuller}{Teichmüller }
\nc{\Torus}{\mathbb{T}}
\dmo{\vol}{vol}
\dmo{\WP}{WP}
\nc{\tray}[1]{R(#1)}
\nc{\trayt}[2]{R_{#1}(#2)}

\dmo{\Aut}{Aut}
\dmo{\Fix}{Fix}
\dmo{\GL}{GL}
\dmo{\id}{id}
\dmo{\PSL}{PSL}
\dmo{\PGL}{PGL}
\dmo{\Rep}{Rep}
\dmo{\SL}{SL}
\dmo{\SO}{SO}
\dmo{\sym}{\mathfrak{S}}

\nc{\calA}{\mathcal{A}}
\nc{\calB}{\mathcal{B}}
\nc{\calC}{\mathcal{C}}
\nc{\calE}{\mathcal{E}}
\nc{\calF}{\mathcal{F}}
\nc{\calD}{\mathcal{D}}
\nc{\calG}{\mathcal{G}}
\nc{\calH}{\mathcal{H}}
\nc{\calI}{\mathcal{I}}
\nc{\calJ}{\mathcal{J}}
\nc{\calK}{\mathcal{K}}
\nc{\calL}{\mathcal{L}}
\nc{\calM}{\mathcal{M}}
\nc{\calN}{\mathcal{N}}
\nc{\calO}{\mathcal{O}}
\nc{\calP}{\mathcal{P}}
\nc{\calQ}{\mathcal{Q}}
\nc{\calR}{\mathcal{R}}
\nc{\calS}{\mathcal{S}}
\nc{\calT}{\mathcal{T}}
\nc{\calU}{\mathcal{U}}
\nc{\calV}{\mathcal{V}}
\nc{\calW}{\mathcal{W}}
\nc{\calX}{\mathcal{X}}
\nc{\calY}{\mathcal{Y}}
\nc{\calZ}{\mathcal{Z}}

\nc{\cech}{\v{C}ech}

\title{Busemann points are nowhere dense}

\author{Aitor Azemar}
\address{}
\email{aitor@azemar.xyz}

\author{Maxime Fortier Bourque}
\address{D\'epartement de math\'ematiques et de statistique, Universit\'e de Montr\'eal, 2920, chemin de la Tour, Montr\'eal (QC), H3T 1J4, Canada}
\email{maxime.fortier.bourque@umontreal.ca}

\date{\today}

\AtBeginDocument{%
   \def\MR#1{}
}
\begin{document}
\begin{abstract}
We prove that the set of Busemann points (the limits of almost-geodesic rays) is nowhere dense in the horoboundary of the Teichm\"uller metric for all Teichm\"uller spaces of complex dimension strictly larger than $1$. This shows that the Teichm\"uller metric is far from having non-positive curvature in a certain sense.
\end{abstract}

\maketitle

\section{Introduction}

One may embed any metric space $(X,d)$ into the vector space $C(X)/\RR$ of continuous real-valued functions on $X$ (equipped with the compact-open topology) modulo constants by sending any $x\in X$ to the equivalence class of the distance function $y \mapsto d(y,x)$. If $(X,d)$ is \emph{proper}, meaning that its closed balls are compact, then the closure of the image of this map is compact and called the \emph{horofunction compactification} of $(X,d)$ (see \cite[Section 1.2]{Gromov} and \cite[Section 4]{Rieffel}). Its boundary points are called \emph{horofunctions} and the set of all horofunctions is the \emph{horoboundary}. \emph{Busemann points} are special horofunctions obtained as limits of \emph{almost-geodesic rays}, a generalization of geodesic rays \cite[Section 4]{Rieffel}. 

The horofunction compactification, horoboundary, or Busemann points, have been calculated explicitly for a few families of metric spaces such as for $\mathrm{CAT}(0)$ spaces  (where all horofunctions are Busemann \cite[Theorem 8.13]{BridsonHaefliger}), Hilbert geometries on convex bodies \cite{WalshHilbert}, finite-dimensional normed vector spaces \cite{KarlssonEtAl,WalshMinkowski}, the continuous Heisenberg group equipped with either the Kor\'anyi \cite{KleinNicas1} or Carnot--Carath\'eodory metric \cite{KleinNicas2}, and Teichm\"uller space equip\-ped with the Thurston metric \cite{WalshThurston}.

For the Teichm\"uller space of a surface of finite type equipped with the Teichm\"uller me\-tric, Liu and Su \cite{LiuSu} proved that the horofunction compactification is isomorphic to the Gardiner--Masur compactification \cite{gardmasur1991extrgeom} obtained by replacing hyperbolic length with (the square root of) extremal length in Thurston's compac\-ti\-fi\-cation. In \cite{Miyachi}, Miyachi proved the existence of non-Busemann points in the closure of the set of Busemann points. In addition to re\-de\-ri\-ving these two results in \cite{walsh2019asympgeom}, Walsh proved that all Busemann points are limits of geodesic rays \cite[Theorem 3]{walsh2019asympgeom} and obtained explicit formulas for them in the Gardiner--Masur compactification \cite[Corollary 1]{walsh2019asympgeom}. Azemar used these formulas together with some horofunctions constructed in \cite{fortier2019Divergent} to show that Busemann points are not dense in the horo\-boun\-dary \cite[Theorem 1.8]{azemar2021qualitative} unless the Teichm\"uller space is isometric to the hyperbolic plane.

Here we strengthen these results of Miyachi and Azemar by showing that Busemann points are nowhere dense in the horoboundary of the \teichmuller metric (for Teichm\"uller spaces of complex dimension larger than $1$).

\begin{thm}\label{th:maintheorem}
Let $g$ and $p$ be non-negative integers such that $3g+p>4$ and let $\calT_{g,p}$ be the Teichm\"uller space of Riemann surfaces of genus $g$ with $p$ punctures. Then the set of Busemann points is nowhere dense in the horo\-boun\-dary of the Teichmüller metric on $\calT_{g,p}$.
\end{thm}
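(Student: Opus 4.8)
The plan is to pass to the Gardiner--Masur compactification, which is isomorphic to the horofunction compactification by the theorem of Liu and Su \cite{LiuSu}, so that every boundary point is the projective class $[\mathcal E]$ of a nonnegative, degree-one-homogeneous function $\mathcal E\colon\MF\to[0,\infty)$ arising as a limit of normalized functions $\sqrt{\mathrm{Ext}_{x_n}(\cdot)}$. The starting observation is that every Busemann point vanishes somewhere on $\PMF$: by Walsh \cite{walsh2019asympgeom} each Busemann point is the limit of a Teichm\"uller geodesic ray, such a ray contracts a measured foliation $F$ whose extremal length decays exponentially while the normalizing factor grows, and therefore the limiting function satisfies $\mathcal E(F)=0$. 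Consequently, writing $f([\mathcal E])=\min_{\PMF}\mathcal E\big/\max_{\PMF}\mathcal E$ --- a continuous, scale-invariant functional on the boundary --- we have $\Bussp\subseteq\{f=0\}$. Since $\{f=0\}$ is closed it contains $\Bclose$, and the theorem will follow once we prove that $\{f>0\}$ is dense in the horoboundary: then $\{f=0\}$ is a closed set with empty interior, hence so is the smaller set $\Bclose$. (This functional is not meant to detect the non-Busemann points that already lie in $\Bclose$, such as those of Miyachi \cite{Miyachi}; it only needs to vanish on $\Bussp$.)

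The core of the argument is thus to produce, arbitrarily close to any prescribed boundary point $\xi=[\mathcal E]$, boundary points whose representing function is bounded away from zero on $\PMF$. The idea is to \emph{thicken} $\mathcal E$: choosing a measured foliation $G$ with $i(\mu,G)>0$ for every $\mu$ in the zero locus of $\mathcal E$, the function $\sqrt{\mathcal E^2+\eps^2\, i(\cdot,G)^2}$ is strictly positive on all of $\PMF$ and converges uniformly to $\mathcal E$ as $\eps\to0$. If each such thickened function can be realized as a genuine Gardiner--Masur boundary point, then these realizations lie in $\{f>0\}$ and accumulate at $\xi$, giving the desired density. Realizing them is where the hypothesis $3g+p>4$ is essential: I would build explicit divergent sequences of surfaces that degenerate simultaneously along the two transverse directions recorded by the zero locus of $\mathcal E$ and by $G$, in the spirit of the constructions in \cite{fortier2019Divergent} and using the formulas of \cite{walsh2019asympgeom}, so that their normalized extremal length functions converge to the prescribed thickened function. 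Room to degenerate in two transverse directions at once exists precisely when the complex dimension exceeds one; in the one-dimensional case the Gardiner--Masur boundary is exactly $\PMF$ and no thickened function is realizable, consistent with the hyperbolic plane having only Busemann points.

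With the density of $\{f>0\}$ in hand the conclusion is immediate: $\{f=0\}$ is closed with dense open complement, hence nowhere dense, and $\Bclose\subseteq\{f=0\}$ is therefore nowhere dense as well. The principal obstacle is the realization step of the second paragraph --- showing that the thickened functions, or a sufficiently rich family approximating $\mathcal E$, genuinely occur as limits of normalized extremal length functions. This requires a delicate construction of degenerating surfaces together with sharp estimates on the extremal lengths of all measured foliations along them, uniform enough to control the Gardiner--Masur limit and to guarantee both proximity to $\xi$ and strict positivity on $\PMF$. Care is also needed for target points $\xi$ whose zero locus is large or has several minimal components, where $G$ must be chosen compatibly; managing this case distinction while keeping the estimates uniform is the most technical part of the plan.
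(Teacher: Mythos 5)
Your proposal has a genuine gap, and the separating functional you propose almost certainly cannot work. The correct observation is that every Busemann point vanishes somewhere on $\PMF$ (indeed $\calE_\phi(V(\phi))=0$), but this property is shared by essentially every point of the Gardiner--Masur boundary one can construct, not just Busemann points. Kerckhoff's formula gives $\min_F \EL(F,Y)/\EL(F,b)=e^{-2d(b,Y)}$ and $\max_F \EL(F,Y)/\EL(F,b)=e^{2d(b,Y)}$, so along any divergent sequence of interior points the quantity you call $f$ tends to $0$; correspondingly, the Busemann points, Miyachi's limits, and the Dehn-twist limits $[\xi_{w,X}]$ of \Cref{lem:maxsresult} --- which are precisely the witnesses this paper uses for the complement of $\Bclose$ --- all have non-trivial zero set (e.g.\ $\xi_{w,X}$ vanishes on every foliation disjoint from the multicurve). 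So $\{f>0\}$ is very likely empty, and in any case your functional cannot separate $\Bclose$ from the points that actually witness the density of its complement. Even granting the framework there are two further unaddressed problems: (i) $\min_{\PMF}$ and $\max_{\PMF}$ are only semicontinuous under the pointwise convergence defining the Gardiner--Masur topology, so the continuity of $f$ (needed to pass from $\Bussp\subseteq\{f=0\}$ to $\Bclose\subseteq\{f=0\}$) would require an equicontinuity statement you do not supply; and (ii) the realization of the thickened functions $\sqrt{\calE^2+\eps^2\, i(\cdot,G)^2}$ as genuine boundary points, which you acknowledge as the main obstacle, is not a technicality but the entire difficulty --- no characterization of which functions on $\MF$ occur in the boundary is known, and there is no reason the boundary is closed under this thickening operation.

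The paper separates $\Bclose$ from its complement by a different, local invariant: by \Cref{pr:closurecharacterization}, every $\xi\in\Bclose$ lying over a direction $q$ under the retraction $\Pi_X$ has $\xi^2$ equal to a homogeneous degree-$2$ polynomial in the intersection numbers with the indecomposable components of $V(q)$ --- a smoothness and rigidity condition within the fiber $\Pi_X^{-1}(q)$, not a positivity condition on $\PMF$. The witnesses outside $\Bclose$ are the Dehn-twist limits $[\xi_{w,X}]$, whose squares fail to be $C^2$ by \Cref{le:baseExample} lifted through the branched covers of \Cref{lem:preciselift}. Their density is then obtained by letting the basepoint run along a Teichm\"uller ray so that the simplices $\Delta_X$ accumulate onto a top-dimensional simplex of Busemann points (\Cref{pr:simplexoutsideclosure}), and by the strong-topology density of the mapping class group orbit of a maximal multicurve cone (\Cref{pr:denseorbitsimplex}). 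If you want to salvage your outline, the functional to use is not $\min/\max$ on $\PMF$ but the failure of the polynomiality criterion of \Cref{pr:closurecharacterization}.
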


This means that the closure of the set of Busemann points has empty interior as a subset of the horoboundary. Equivalently, the complement of the closure of the set of Busemann points is dense in the horoboundary. Since the horofunctions of a $\mathrm{CAT}(0)$ space are all Busemann points, \Cref{th:maintheorem} shows that the Teichm\"uller metric is rather far from having non-positive curvature, adding to several previous results in that direction \cite{Masur,MasurWolf,Minsky,FBRafi}.

By \cite{KleinNicas1} and \cite{KleinNicas2}, the horoboundary of the continuous Heisenberg group of dimension $(2n+1)$ with respect to the Kor\'anyi or Carnot--Carath\'eo\-do\-ry metric is homeomorphic to a closed disk of dimension $2n$, with the set of Busemann points given as the $(2n-1)$-dimensional spherical boundary of that disk. In particular, Busemann points are also nowhere dense for continuous Heisenberg groups, but they still form closed subsets while this is not the case for the Teichm\"uller metric by Miyachi's result. Furthermore, the horoboundary of the Teichm\"uller metric appears to be more complicated topologically.

Indeed, recall that when when $2g+p > 2$, the Teichm\"uller space $\calT_{g,p}$ is homeomorphic to $\RR^{6g+2p-6}$. Its horoboundary containts a sphere of dimension $6g+2p-7$ which can be identified with Thurston's sphere of projective measured foliations \cite[Theorem 7.1]{gardmasur1991extrgeom} and onto which the horoboundary admits a retraction \cite[Theorems 1.1 and 8.1]{azemar2021qualitative}. One can therefore think of the horo\-boun\-dary of $\calT_{g,p}$ as a $(6g+2p-7)$-dimensional sphere with spikes (the fibers of the retraction) attached to it. The Thurston sphere is in some sense on the outside of the horocompactification \cite[Proposition 6.3 and Corollary 4.8]{azemar2021qualitative} and the Busemann points are the tips of the inward spikes \cite[Proposition 3.9]{azemar2021qualitative}. Note that Busemann points are dense within the Thurston sphere (this follows from the proof of \cite[Theorem 7.1]{gardmasur1991extrgeom}), but the Thurston sphere itself is nowhere dense within the horoboundary \cite[Theorem 7.5]{azemar2021qualitative}. These results and \Cref{th:maintheorem} suggest that $\calT_{g,p}$ sits in a rather convoluted way inside its horofunction compactification, which for all we know could still be homeomorphic to a closed ball (see \cite[Figure 1]{azemar2021qualitative} for a sketch of what that might look like).

The paper is organized as follows. \Cref{sec:defs} gives necessary background in Teichm\"uller theory and sets our notation. \Cref{sec:points_outside} recalls a strategy from \cite{azemar2021qualitative} for constructing boundary points outside the closure of Busemann points. \Cref{sec:simplices} then shows how to construct a top-dimensional simplex of points outside the closure of Busemann points by choosing a suitable branched cover of our surface onto the five-times-punctured sphere. This simplex $\Delta_X$ depends on a choice of basepoint $X$ and we show that by pushing $X$ to infinity along certain Teichm\"uller rays, $\Delta_X$ accumulates onto a top-dimensional simplex $\Delta$ of Busemann points corresponding to Jenkins-Strebel quadratic differentials. Lastly, \Cref{sec:orbits} shows that the mapping class group orbit of any such top-dimensional Jenkins-Strebel simplex is dense among Busemann points and uses this to prove \Cref{th:maintheorem}. Note that the action of the mapping class group on the horoboundary is not minimal (for instance, the Thurston sphere is a closed invariant set), hence taking the orbit of a single point outside the closure of Busemann point would not be enough to show that the complement of the closure is dense.

\section{Definitions and notation} \label{sec:defs}

We recall some standard definitions and set our notation here. We refer the reader to \cite{Hubbard,FarbMargalit,FLP} for further background on Teichm\"uller theory.

\subsection{Teichm\"uller space and the Teichm\"uller metric}

Let $S$ be a closed orientable surface with finitely many points removed (perhaps none). We say that $S$ is a surface of \emph{finite type}.

The \emph{Teichm\"uller space} $\Teich(S)$ is the set of equivalence classes of pairs $(X,f)$ where $X$ is a compact Riemann surface minus a finite number of points and $f:S\to 
 X$ is a \emph{marking}, that is, an orientation-preserving homeomorphism. Two such pairs $(X,f)$ and $(Y,g)$ are \emph{equivalent} if $g\circ f^{-1}$ is homotopic to a biholomorphism. We typically keep the marking implicit and simply write $X \in \Teich(S)$ instead of $[(X,f)]\in \Teich(S)$. The \emph{mapping class group} $\MCG(S)$ of homotopy classes of orientation-preserving homeomorphisms $h:S\to S$ acts on $\Teich(S)$ by $[h]\cdot [(X,f)]:=[(X,f\circ h^{-1})]$, which we denote by $h(X)$ when the marking is implicit.

The \emph{Teichm\"uller distance} between two points $[(X,f)],[(Y,g)]\in \Teich(S)$ is defined as
\[
d(X,Y):= \inf \frac12 \log K,
\]
where the infimum is over the set of $K\geq 1$ such that there exists a $K$-quasiconformal homeomorphism homotopic to $g\circ f^{-1}$.

\subsection{Quadratic differentials and Teichm\"uller geodesics}

A \emph{quadratic differential} on a Riemann surface $X$ is a function $q: TX \to \CC$ such that $q(\lambda v) = \lambda^2 q(v)$ for every $v \in TX$ and every $\lambda>0$. We only consider quadratic differentials that are \emph{integrable}, meaning that \[\|q\|:= \int_X |q| < \infty,\] and \emph{holomorphic}, meaning that for any holomorphic vector field $v$ defined on an open set in $X$ , the function $q\circ v$ is holomorphic. The set of all integrable holomorphic quadratic differentials on $X$ will be denoted $Q(X)$ and its subset of quadratic differentials of norm $1$ will be denoted $Q^1(X)$.

A \emph{natural coordinate} for a (non-identically zero) quadratic differential $q$ is a chart $z:U \to \CC$ from an open set $U \subset X$ such that $q = dz^2$ on $TU$. Natural coordinates form a complex atlas on $X$ minus the finite set of zeros of $q$ whose transition functions are \emph{half-translations} of the form $z \mapsto \pm z+c$. One may deform this atlas by post-composing each natural coordinate with the matrix $\begin{pmatrix}e^t & 0 \\ 0 & e^{-t}\end{pmatrix}$ for $t\in \RR$. By Teichm\"uller's theorem, this path defines a geodesic with respect to the Teichm\"uller metric, which we denote by $R_q(t)$.

\subsection{Measured foliations and extremal length}

The space of measured foliations a surface of finite-type $S$, up to measure-equivalence, is denoted by $\MF(S)$ and $\PMF(S)=(\MF(S)\setminus\{0\})/\RR_{>0}$. For example, for a non-trivial quadratic differential $q$ on a Riemann surface $X$, there is an associated \emph{vertival foliation} $V(q)$ whose leaves map to vertical lines in the plane under natural coordinates, equipped with the transverse measure $\left|\re \sqrt q \right|$. The horizontal foliation $H(q)$ is defined analogously. We also define $H(0)=V(0)$ to be the zero or empty measured foliation. If $[(X,f)]\in \Teich(S)$, then we will abuse notation and consider $H(q)$ and $V(q)$ as measured foliations on $S$ (by pulling them back by $f$).

A theorem of Hubbard and Masur \cite{HubbardMasur} states that for any $X\in \Teich(S)$, the map
\[
\begin{array}{ccc}
Q(X) &\to & \MF(S) \\
 q &\mapsto & V(q)
 \end{array}
\]
is a homeomorphism, and similarly for the horizontal foliation map $V$. Since $V(\lambda^2 q) = \lambda V(q)$ for $\lambda > 0$, these maps restrict to a homeomorphisms between $Q^1(X)$ and $\PMF(S)$.

In particular, for every $F \in \MF(S)$ and every $X \in \Teich(S)$, there is a unique $q_F \in Q(X)$ such that $V(q_F) = F$. One may use this to define the \emph{extremal length} of $F$ on $X$ as
\[
\EL(F,X)= \| q_F \|,
\]
which agrees with the usual definition when $F$ is a simple closed curve.

\subsection{The Gardiner--Masur boundary}

 Let 
 \[
 \PP\left(\RR_{\geq 0}^{\MF(S)}\right) = \left( \RR_{\geq0}^{\MF(S)} \setminus \{ 0 \}\right) / \RR_{>0}
 \]
 be the space of non-trivial functions $\MF(S)\to \RR_{\geq 0}$ up to multiplication by strictly positive scalars.

By work of Gardiner and Masur \cite[Theorem 6.1]{gardmasur1991extrgeom}, the map
 \[
 \begin{array}{ccl}
\Teich(S) & \to & \PP\left(\RR_{\geq 0}^{\MF(S)}\right) \\
 X &\mapsto & \left[\sqrt{\EL(F,X)}\right]_{F \in \MF(S)}
 \end{array}
 \]
 is an embedding, and the closure of its image is compact, hence defines a compactification called the \emph{Gardiner--Masur compactification}  of $\Teich(S)$. Note that the initial definition by Gardiner and Masur used the subset of essential simple curves on $S$ instead of $\MF(S)$, but both yield isomorphic compactifications \cite[Lemma 19]{walsh2019asympgeom}. There is also a non-projective version where one chooses a particular scaling by fixing a basepoint in $\Teich(S)$.

\subsection{The horoboundary and the visual boundary} \label{subsec:horo}

We gave one definition of the horofunction compactification of a proper metric space $(W,d)$ in the introduction, but here is an equivalent one which is slightly more concrete. Fix a basepoint $b\in W$, then for every $y \in W$, associate the function $h_y:W \to \RR$ defined by $h_y(x) = d(x,y)-d(y,b)$. All these functions are $1$-Lipschitz and vanish at the point $b$. The map $y \mapsto h_y$ is an embedding (since $h_y$ has a unique minimum at $y$) and has compact closure with respect to the compact-open topology. This closure is the \emph{horofunction compactification} of $(W,d)$. Changing the basepoint yields isomorphic compactifications.

In the case where $(W,d)$ is the Teichm\"uller space $\Teich(S)$ equipped with the Teichm\"uller metric, Liu and Su \cite{LiuSu} proved that the horofunction compactification, which we will denote by $\overline{\Teich}^h(S)$, is isomorphic to the Gardiner--Masur compactification. We will often confound these two compactifications.

Given a basepoint $b\in \Teich(S)$ and any unit quadratic differential $q \in Q^1(b)$, the Teichm\"uller geodesic $R_q(t)$ converges to some limit $B(q)$ in the \emph{horoboundary} $\partial_h\Teich(S):=\overline{\Teich}^h(S) \setminus \Teich(S)$, as follows from a more general result of Rieffel \cite{Rieffel}. Conversely, every Busemann point (limit of an almost-geodesic ray) in $\partial_h\Teich(S)$ is equal to $B(q)$ for a unique $q \in Q^1(b)$ \cite[Theorem 6]{walsh2019asympgeom}. We will denote this set of Busemann points by $\calB$.

One can also define a visual compactification of Teichm\"uller space by adding a point at the end of each geodesic ray based at $b$ (see e.g. \cite{Kerckhoff} or \cite[Section 2.2]{azemar2021qualitative}). Azemar \cite[Theorem 1.1]{azemar2021qualitative} proved that there is a continuous map \[\Pi_b : \overline{\Teich}^h(S)  \to \Teich(S) \cup Q^1(b)\] from the horofunction compactification to this visual compactification such that $\Pi_b \circ B = \id$ on $Q^1(b)$, where $B$ is the Busemann map defined above. This means that any sequence in $\Teich(S)$ that converges to a point in the horoboun\-dary $\partial_h\Teich(S)$ has a well-defined limiting direction from the point of view of the basepoint $b$.

\section{Horofunctions outside the closure of Busemann points}  \label{sec:points_outside}

In this section, we mainly recall several results from \cite{azemar2021qualitative} that are needed for our proof.

The first result we need provides a necessary criterion for a point in the Gardiner--Masur boundary to be in the closure of Busemann points.

\begin{prop}[{\cite[Proposition 6.2]{azemar2021qualitative}}]\label{pr:closurecharacterization}
Let $S$ be a surface of finite type, let $X \in \Teich(S)$, let $q\in Q^1(X)$, let $\xi\in \overline{\mathcal{B}}\cap \Pi_X^{-1}(q)$, and let $V_j$ be the indecomposable components of the vertical foliation $V(q)$. Then the square of any representative of $\xi$ in the Gardiner--Masur compactification is a homogeneous polynomial of degree 2 in the variables $x_j=\frac{i(V_j,\cdot)}{i(V_j,H(q))}$, where $i$ denotes the intersection number.
\end{prop}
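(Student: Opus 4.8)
The plan is to reduce to the explicit formula for Busemann points. Since the Gardiner--Masur compactification is metrizable and $\xi$ lies in the closure $\overline{\calB}$, I may choose a sequence of Busemann points $B(p_n)$ with $p_n \in Q^1(X)$ converging to $\xi$. Because $\Pi_X$ is continuous and $\Pi_X \circ B = \id$ on $Q^1(X)$, I first observe that $p_n = \Pi_X(B(p_n)) \to \Pi_X(\xi) = q$ in $Q^1(X)$. By Walsh's explicit formula \cite[Corollary 1]{walsh2019asympgeom}, each $B(p_n)$ is represented in the Gardiner--Masur compactification by a function whose square is $F \mapsto \sum_k i(V_k^{(n)},F)^2 / i(V_k^{(n)},H(p_n))$, where $V_k^{(n)}$ runs over the indecomposable components of $V(p_n)$. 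This is a nonnegative quadratic form in the linear functionals $F \mapsto i(V_k^{(n)},F)$, so the entire problem becomes understanding the limits of these functionals as $p_n \to q$.

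Second, I would carry out the limiting analysis. As the base Riemann surface $X$ is fixed, the Hubbard--Masur maps $p \mapsto V(p)$ and $p \mapsto H(p)$ are continuous, so $V(p_n) \to V(q)$ and $H(p_n) \to H(q)$ in $\MF(S)$. After normalizing the components and passing to a subsequence, I may assume that each $V_k^{(n)}$ converges to a measured foliation $W_k$ (possibly the zero foliation) and that the number of components is constant. The crucial \emph{lemma} to establish is that every nonzero limit $W_k$ is a nonnegative combination $\sum_j \alpha_{kj} V_j$ of the indecomposable components $V_j$ of $V(q)$. Granting this, continuity of the intersection number yields $i(V_k^{(n)},F) \to \sum_j \alpha_{kj}\, i(V_j,F)$ and $i(V_k^{(n)},H(p_n)) \to i(W_k,H(q)) = \sum_j \alpha_{kj}\, i(V_j,H(q)) > 0$, where positivity holds because $\alpha_{kj}\ge 0$ and each vertical component satisfies $i(V_j,H(q))>0$; thus no denominator degenerates. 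The components with $W_k=0$ must be shown to contribute negligibly, which I would control by bounding their transverse measures through the relation $\sum_k V_k^{(n)} = V(p_n) \to V(q)$.

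Third, I would pass to the limit to conclude. Convergence in the Gardiner--Masur compactification is convergence of representatives up to positive scalars in the compact--open topology, and squaring is continuous, so the square of a representative of $\xi$ equals, up to a positive scalar, $F \mapsto \sum_k i(W_k,F)^2 / i(W_k,H(q))$. Writing $i(V_j,F) = i(V_j,H(q))\, x_j$ turns each $i(W_k,F) = \sum_j \alpha_{kj}\, i(V_j,H(q))\, x_j$ into a linear form in the variables $x_j$, so the expression is a sum of squares of linear forms divided by positive constants, that is, a homogeneous polynomial of degree $2$ in the $x_j$. Since any two representatives of $\xi$ differ by a positive scalar, this establishes the claim for every representative.

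Finally, I expect the main obstacle to be the lemma in the second step: the decomposition into indecomposable components is not continuous, and I must rule out that a minimal but non-uniquely-ergodic component of $V(q)$ is approximated by components of $V(p_n)$ carrying a different ergodic transverse measure, which would produce an intersection functional lying outside the span of the $i(V_j,\cdot)$. I would attempt to exclude this by exploiting that all the $p_n$ lie over the fixed Riemann surface $X$, so that the convergence is governed by the Hubbard--Masur homeomorphism and forces the limiting transverse measure on each minimal piece to agree with that of $V(q)$ up to scale; the bookkeeping for vanishing components is a secondary but necessary ingredient.
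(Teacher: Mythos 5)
This proposition is not proved in the paper at all: it is quoted from \cite[Proposition 6.2]{azemar2021qualitative}, so there is no internal proof to compare against. Your overall strategy --- extract Busemann points $B(p_n)\to\xi$, use continuity of $\Pi_X$ together with $\Pi_X\circ B=\id$ to force $p_n\to q$ in $Q^1(X)$, invoke Walsh's formula for $\calE_{p_n}$, and pass to the limit of the resulting quadratic forms --- is the natural route and is essentially what the cited source does. But as written there is a genuine gap in your handling of the vanishing components. When $V_k^{(n)}\to 0$, the term $i(V_k^{(n)},F)^2/i(V_k^{(n)},H(p_n))$ is a true $0/0$: the denominator $i(V_k^{(n)},H(p_n))$ tends to $i(0,H(q))=0$ as well, so ``bounding their transverse measures through the relation $\sum_k V_k^{(n)}=V(p_n)$'' controls only the numerator and proves nothing. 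Writing $V_k^{(n)}=t_nU_k^{(n)}$ with $U_k^{(n)}$ normalized and $t_n\to 0$, the term equals $t_n\, i(U_k^{(n)},F)^2/i(U_k^{(n)},H(p_n))$, and one must show the denominator stays bounded away from zero. This can be rescued: any subsequential limit $U$ of the $U_k^{(n)}$ satisfies $i(U,V(q))=0$, because an indecomposable component of $V(p_n)$ has zero intersection with $V(p_n)$ and $i$ is continuous; since $V(q)$ and $H(q)$ jointly fill, $i(U,H(q))>0$, so the denominator is bounded below and the term is $O(t_n)$. Without such an argument the limit could a priori acquire extra summands not expressible in the $x_j$, which is exactly what the proposition forbids.

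Your diagnosis of the ``crucial lemma'' also aims at the wrong danger. If a minimal component of $V(q)$ is non-uniquely ergodic, a component of $V(p_n)$ limiting onto a \emph{different} invariant measure on that same minimal piece is harmless: every invariant transverse measure is a nonnegative combination of the ergodic ones, i.e.\ of the corresponding $V_j$, so the limiting functional still lies in their span. The real content is the face structure of $\MF(S)$: additivity of $i$ over indecomposable components plus continuity gives $\sum_k W_k = V(q)$, and one needs that every measured foliation $W$ admitting a complement $W'$ with $W+W'=V(q)$ lies in the cone spanned by the indecomposable components of $V(q)$ (obvious for cylinder pieces, ergodic decomposition for minimal pieces). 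That statement is true but is the structural input you must actually prove or cite; once it and the vanishing-component estimate are in place, the remaining steps (positivity of each $i(V_j,H(q))$, rewriting in the variables $x_j$, and carrying the projective scalars through the limit) are correct as you present them.
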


Recall that an \emph{indecomposable component} of a measured foliation is either a cylinder component or one of the ergodic parts of the transverse measure on a minimal component.

In particular, each coordinate of any $\xi\in \overline{\mathcal{B}}\cap \Pi_X^{-1}(q)$ (or more precisely, of  $\xi^2$ as a function in the deprojectivized Gardiner--Masur boundary) is smooth with respect to the variables $x_j$. Azemar uses this result to show that Busemann points are not dense within the horoboundary by finding points in some $\Pi_X^{-1}(q)$ which are not smooth in the $x_j$'s. These strange boundary points, which were first used in \cite{fortier2019Divergent} and which we will need again here, are obtained as limits of certain sequences of Dehn multitwists applied to any point in Teichm\"uller space.

\begin{lem}[Extension of {\cite[Corollary 3.4]{fortier2019Divergent}}]\label{lem:maxsresult}
		Let $\gamma_1 \cup\cdots \cup\gamma_k$ be a multicurve in a surface $S$ of finite type, let $\tau_j$ be either the left or right Dehn twist about $\gamma_j$, let $w=(w_1,\ldots,w_k)$ be a vector of positive weights, and for every $n\geq 1$, let $\phi_n=\tau_1^{\lfloor n w_1 \rfloor}\circ \dots \circ \tau_k^{\lfloor n w_k \rfloor}$. Then for every $X \in \Teich(S)$, the sequence $(\phi_n(X))_{n=1}^\infty$ converges to the projective class of the map
		\[
			F \mapsto \xi_{w,X}(F):=\EL^{1/2}\left(\sum_{j=1}^k w_j i(\gamma_j,F)\gamma_j,X\right)
		\]
		in the Gardiner--Masur compactification.
\end{lem}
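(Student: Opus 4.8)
The plan is to reduce the statement to the mapping-class-group equivariance of extremal length together with the well-understood limiting behaviour of measured foliations under large powers of Dehn twists, and then to push this through the definition of the Gardiner--Masur topology as pointwise convergence of projective representatives. First I would record the equivariance $\EL(F,\phi_n(X))=\EL(\phi_n^{-1}(F),X)$, which follows by tracking how the marking transforms under the action $h([(R,f)])=[(R,f\circ h^{-1})]$ and the fact that the push-forward of $F$ by the marking is what one measures. Since $\gamma_1,\dots,\gamma_k$ are disjoint, the twists $\tau_j$ commute, so $\phi_n^{-1}=\prod_{j}\tau_j^{-\lfloor nw_j\rfloor}$ is again a multitwist, and flipping the sign of each exponent changes only the direction, not the magnitude, of the twisting.

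The heart of the argument is the claim that
\[
\frac{1}{n}\,\phi_n^{-1}(F)\longrightarrow \sum_{j=1}^{k} w_j\, i(\gamma_j,F)\,\gamma_j=:G \qquad\text{in }\MF(S).
\]
Because convergence in $\MF(S)$ is detected by intersection numbers with simple closed curves, and because $i(\phi_n^{-1}(F),\alpha)=i(F,\phi_n(\alpha))$ by invariance of the geometric intersection pairing, it suffices to control $i(F,\phi_n(\alpha))$. For a single curve one has the standard estimate $\bigl|i(\tau_\gamma^m\alpha,\beta)-|m|\,i(\gamma,\alpha)\,i(\gamma,\beta)\bigr|\le i(\alpha,\beta)$ (see \cite{FLP}); for the disjoint family $\gamma_1,\dots,\gamma_k$ this upgrades to the additive estimate $i(\phi_n(\alpha),F)=\sum_j \lfloor nw_j\rfloor\, i(\gamma_j,\alpha)\,i(\gamma_j,F)+O(1)$, with error bounded independently of $n$ precisely because disjointness prevents the separate twisting regions from interacting. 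Dividing by $n$ and letting $n\to\infty$ gives $\tfrac1n\, i(\phi_n^{-1}(F),\alpha)\to i(\alpha,G)$ for every $\alpha$, which is exactly the asserted $\MF(S)$-convergence.

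Next I would invoke continuity of extremal length $\EL(\cdot,X)\colon\MF(S)\to\RR_{\ge 0}$ together with the homogeneity $\EL(\lambda F,X)=\lambda^2\EL(F,X)$. Writing $G_n=\tfrac1n\phi_n^{-1}(F)$, continuity gives $\EL(G_n,X)\to\EL(G,X)$, that is,
\[
\frac{1}{n^2}\,\EL(\phi_n^{-1}(F),X)\longrightarrow \EL(G,X)=\xi_{w,X}(F)^2,
\]
and hence $\tfrac1n\sqrt{\EL(F,\phi_n(X))}\to\xi_{w,X}(F)$ for every $F\in\MF(S)$. To finish I would upgrade this pointwise statement to convergence in the Gardiner--Masur compactification: the functions $F\mapsto \tfrac1n\sqrt{\EL(F,\phi_n(X))}$ are nonzero positive representatives of the projective classes $\phi_n(X)$, they converge pointwise on $\MF(S)$ to $\xi_{w,X}$, and $\xi_{w,X}\not\equiv 0$ (for instance $\xi_{w,X}(\alpha)>0$ whenever $\alpha$ meets some $\gamma_j$). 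Continuity of the quotient projection $\RR_{\ge 0}^{\MF(S)}\setminus\{0\}\to\PP\!\left(\RR_{\ge 0}^{\MF(S)}\right)$ then yields $\phi_n(X)\to[\xi_{w,X}]$.

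I expect the main obstacle to be the second step: establishing the additive multitwist estimate for disjoint curves with genuinely distinct weights and floor-rounded exponents, uniformly in $n$, and verifying that it passes from simple closed curves to arbitrary $F\in\MF(S)$ (by continuity of intersection number and of the constructions involved). Once the $\MF(S)$-limit is secured, the continuity of extremal length and the projectivization step are comparatively routine.
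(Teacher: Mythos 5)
The paper does not actually prove this lemma: it cites \cite[Corollary 3.4]{fortier2019Divergent} and asserts that ``the same proof yields the above version with weights.'' Your argument is the natural one and, in outline, it is the proof behind that citation: use $\EL(F,\phi_n(X))=\EL(\phi_n^{-1}(F),X)$, show $\frac1n\phi_n^{-1}(F)\to\sum_j w_j i(\gamma_j,F)\gamma_j$ in $\MF(S)$, apply continuity and $2$-homogeneity of extremal length, and projectivize. The reduction of projective convergence to pointwise convergence of the representatives $\frac1n\sqrt{\EL(\cdot,\phi_n(X))}$ is fine (the quotient map to $\PP(\RR_{\ge0}^{\MF(S)})$ is continuous and the limit is nonzero), and your handling of the floors and of foliations disjoint from all the $\gamma_j$ causes no trouble.

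The one step you should not wave at is exactly the one you flag: the two-sided additive multitwist estimate. Iterating the single-twist inequality $\lvert i(\tau_\gamma^m\alpha,\beta)-\lvert m\rvert\, i(\gamma,\alpha)i(\gamma,\beta)\rvert\le i(\alpha,\beta)$ curve by curve does give the upper bound, but for the lower bound it produces
\[
i(\tau_2^{m_2}\tau_1^{m_1}\alpha,\beta)\;\ge\; \lvert m_2\rvert\, i(\gamma_2,\alpha)i(\gamma_2,\beta)-i(\tau_1^{m_1}\alpha,\beta),
\]
and the subtracted term grows linearly in $\lvert m_1\rvert$, so the iteration is useless in that direction. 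You need the genuine multitwist inequality
\[
\Bigl\lvert\, i\Bigl(\textstyle\prod_j\tau_j^{m_j}(F),G\Bigr)-\sum_j \lvert m_j\rvert\, i(\gamma_j,F)\,i(\gamma_j,G)\Bigr\rvert\;\le\; i(F,G),
\]
valid for disjoint $\gamma_j$ with exponents of arbitrary (possibly mixed) signs; this is proved directly by a count in the disjoint twisting annuli (or a train-track argument), not by iteration, and is available in the literature (FLP's appendix for equal exponents, Ivanov for the general case). Since the lemma explicitly allows each $\tau_j$ to be a left or a right twist with its own weight, this inequality \emph{is} the content of the ``extension,'' so cite or prove it rather than deriving it from the single-twist case. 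With that reference supplied, your proof is complete; the passage from simple closed curves to arbitrary $F$ in the second argument follows, as you say, from continuity and homogeneity of both sides together with density of weighted simple closed curves.
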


The statement given in \cite[Corollary 3.4]{fortier2019Divergent} is less general but the same proof yields the above version with weights, as observed in \cite{azemar2021qualitative}. 

We will want to show that the conclusion of \Cref{pr:closurecharacterization} fails for certain limit points resulting from \Cref{lem:maxsresult}. In order to do so, we will need to know something about the quadratic differentials whose fiber contains such limit points.

\begin{lem}\label{le:pointinfiber}
With the same notation as in \Cref{lem:maxsresult}, the limit point
$[\xi_{w,X}]$ belongs to \(\Pi_X^{-1}(\psi)\) for some quadratic differential \(\psi\in Q^1(X)\) whose vertical foliation is a positive linear combination of the curves $\gamma_j$.
\end{lem}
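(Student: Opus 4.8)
The plan is to identify the direction $\psi=\Pi_X([\xi_{w,X}])$ explicitly and to check that its vertical foliation is the desired positive combination of the $\gamma_j$. By \Cref{lem:maxsresult}, the interior points $\phi_n(X)$ converge to $[\xi_{w,X}]\in\partial_h\Teich(S)$ in the horofunction compactification $\overline{\Teich}^h(S)$. As recalled after the definition of $\Pi_X$, this means that $\phi_n(X)$ has a well-defined limiting direction from the point of view of $X$, namely $\Pi_X([\xi_{w,X}])=\psi\in Q^1(X)$. Concretely, if $q_n\in Q^1(X)$ denotes the initial direction of the Teichm\"uller geodesic from $X$ to $\phi_n(X)$, so that $\phi_n(X)=R_{q_n}(t_n)$ with $t_n=d(X,\phi_n(X))\to\infty$, then $q_n\to\psi$ in $Q^1(X)$. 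It therefore suffices to show that $\lim_n q_n$ exists and equals a $\psi$ with $V(\psi)=\sum_j c_j\gamma_j$, $c_j>0$; the existence and uniqueness of $\psi\in Q^1(X)$ with a prescribed vertical foliation is guaranteed by Hubbard--Masur, and such a $\psi$ is automatically Jenkins--Strebel when the foliation is a multicurve.

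Next I would analyze the limiting direction of the geodesics $[X,\phi_n(X)]$. Since $\gamma_1,\dots,\gamma_k$ are disjoint, the twists commute and $\phi_n=\prod_j\tau_j^{\floor{n w_j}}$ is a genuine multitwist supported on annuli around the $\gamma_j$, with all exponents $\floor{n w_j}\to\infty$ at the comparable rates $w_j>0$. By compactness of $Q^1(X)$ the directions $q_n$ subconverge, and by the first paragraph every subsequential limit equals $\psi$, so it is enough to show that any such limit is Jenkins--Strebel with cylinders supported on the $\gamma_j$. The mechanism is the standard asymptotic behaviour of extremal length along a Teichm\"uller ray: for $q\in Q^1(X)$ one has $\EL(F,R_q(t))^{1/2}\sim e^{t}\, i(F,V(q))$ as $t\to\infty$, so the Gardiner--Masur limit of $R_q(t)$ depends on $F$ only through its intersection numbers with the indecomposable components of $V(q)$ (this is the content underlying \Cref{pr:closurecharacterization}). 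Since the explicit limit $\xi_{w,X}(F)^2=\EL(\sum_j w_j\, i(\gamma_j,F)\gamma_j,X)$ depends on $F$ only through the numbers $i(\gamma_1,F),\dots,i(\gamma_k,F)$, matching the two descriptions forces the indecomposable components of $V(\psi)$ --- each of which is a cylinder because the $\gamma_j$ are simple closed curves --- to lie on the multicurve $\gamma_1\cup\dots\cup\gamma_k$. Hence $V(\psi)=\sum_j c_j\gamma_j$ for some $c_j\ge 0$.

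It remains to prove that every $c_j$ is strictly positive, and this is where I expect the real work to lie. The weight $c_j$ is the transverse mass that $V(\psi)$ places on $\gamma_j$, equivalently the modulus of the limiting Jenkins--Strebel cylinder with core $\gamma_j$, which is fed by the twisting rate $w_j>0$; intuitively no positively weighted curve can disappear in the limit. To make this rigorous I would use strict monotonicity of extremal length --- $\EL(\cdot,X)$ strictly increases when a positive multiple of $\gamma_j$ is added --- to show that $\xi_{w,X}$ genuinely depends on $i(\gamma_j,F)$ for each $j$, and then argue that a vanishing coefficient $c_j=0$ would make the fiber description over $\psi$ independent of $i(\gamma_j,\cdot)$, contradicting this dependence. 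The delicate point is controlling the limiting flat geometry well enough to exclude degenerations in which one cylinder collapses while the others survive; handling the proportional but unequal growth rates $\floor{n w_j}$ for all $j$ simultaneously, rather than a single twist at a time, is the main obstacle. Once positivity is established, $\psi$ is the desired quadratic differential and the lemma follows.
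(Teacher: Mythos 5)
Your overall framing (identify $\psi=\Pi_X([\xi_{w,X}])$ as the limit of the initial directions $q_n$ of the segments from $X$ to $\phi_n(X)$) is reasonable, but both substantive steps have gaps. In your second paragraph, the ``matching'' argument does not work as stated: $[\xi_{w,X}]$ is the limit of the \emph{sequence} $\phi_n(X)$, not of the \emph{ray} $R_\psi(t)$, and these limits generally differ --- indeed the whole point of the paper is that $[\xi_{w,X}]$ is \emph{not} the Busemann point $B(\psi)$. The extremal-length asymptotics along a ray describe $B(q)$ (and the correct limit is $\calE_q$, not $i(F,V(q))^2$ unless $V(q)$ is indecomposable), while \Cref{pr:closurecharacterization} only constrains points of $\overline{\calB}\cap\Pi_X^{-1}(q)$; neither gives a description of an arbitrary point of the fiber $\Pi_X^{-1}(\psi)$ in terms of intersection numbers with the components of $V(\psi)$, so you cannot ``match the two descriptions'' to conclude that $V(\psi)$ is supported on the $\gamma_j$. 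Your third paragraph then explicitly leaves the positivity of the coefficients open and only sketches a strategy, so the argument is incomplete there as well.

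The paper sidesteps both difficulties by building $\psi$ first: by Strebel's theorem there is a unique $\psi\in Q^1(X)$ whose vertical foliation decomposes $X$ into cylinders with cores $\gamma_j$ and moduli $Mw_j$, so $V(\psi)$ is automatically a positive combination of the $\gamma_j$ and no positivity argument is needed. One then observes that shearing the cylinder $C_j$ by $1/(Mw_j)$ realizes the twist $\tau_j$, so the multitwist $\phi_n$ differs from the Teichm\"uller horocycle $h_{n/M}(X)$ in the disk of $\psi$ by residual shears of size $\{nw_j\}/(Mw_j)$ in each cylinder. A pigeonhole argument produces a subsequence $n_m$ along which all fractional parts $\{n_m w_j\}$ tend to $0$ simultaneously, so that $d\left(\phi_{n_m}(X),h_{n_m/M}(X)\right)\to 0$; since the horocycle converges to $\psi$ in the visual compactification, it follows that $[\xi_{w,X}]\in\Pi_X^{-1}(\psi)$. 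If you want to pursue your route, you would need to supply (i) a substitute for the unjustified fiber description and (ii) quantitative control of the degenerating flat metrics to rule out a collapsing cylinder; the paper's construction makes both issues disappear.
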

\begin{proof}
By a result of Strebel \cite{strebel1966modular}, there is some $M>0$ and a unique quadratic differential \(\psi\in Q^1(X)\) whose vertical foliation decomposes $X$ into \(k\) cylinders $C_j$, each with with core curve \(\gamma_j\) and modulus (the ratio of width to circumference) equal to \(M w_j\).

A standard application of the pigeonhole principle implies that for any \(\varepsilon>0\) and any $N\in \NN$, there is an \(n \geq N \) such that \(\{ n w_j \}<\varepsilon\) for all \(j \in \{ 1, \ldots, k\}\), where $\{x\}:= x-\lfloor x \rfloor$ denotes the fractional part. It follows that there exists a sequence \( (n_{m})_{m=1}^\infty \subseteq \NN\) diverging to infinity such that \(  \{n_m w_j \} \to 0\) as $m\to\infty$, for every $j$.

Let
\[
h_{t}=\begin{pmatrix}
1 & 0 \\
t & 1
\end{pmatrix}
\]
be the vertical shear by $t$ with respect to the natural coordinates for \(\psi\). Observe that shearing the cylinder $C_j$ by $1/(M w_j)$ performs the left Dehn twist $\tau_j$ about $\alpha_j$. Thus, shearing $X$ by $n_m/M$ is the same as twisting $\lfloor n_m w_j \rfloor$ times and then shearing by 
 \( \{n_m w_j\} /(M w_j)\) in each cylinder $C_j$. The piecewise linear map obtained by performing this last shear in each cylinder has quasiconformal constant tending to $1$ as $m \to \infty$ since the shears tend to zero. Hence, the Teichm\"uller distance between $h_{n_m/M}(X)$ and $\phi_{n_m}(X)$ tends to zero as $m \to \infty$, where $\phi_n=\tau_1^{\lfloor n w_1 \rfloor}\circ \dots \circ \tau_k^{\lfloor n w_k \rfloor}$. Therefore, the accumulation points of the sequence \(\phi_{n_m}(X) \) in the horoboundary are contained in the accumulation points of \(h_t(X)\). Note that the path \(h_t(X)\) 
 is a horocycle in the Teichm\"uller disk $D$ through $\psi$ which converges to the same limit point as the ray $R_\psi$ in the visual compactification of $D$ (which is isometric to the hyperbolic plane). In particular, if we write $h_t(X)=R_{\psi_t}(s_t)$ for some $\psi_t \in Q^1(X)$ and $s_t \geq 0$, then $\psi_t \to \psi$ and $s_t \to \infty$ as $t \to \infty$, so that $h_t(X)\to \psi$  in the visual compactification of $\Teich(S)$ as $t \to \infty$. It follows that the accumulation points of the path $h_t(X)$ in the horoboundary $\partial_h\Teich(S)$ are all contained in \(\Pi_X^{-1}(\psi)\), hence so are those of the sequence $(\phi_{n_m}(X))_{m=1}^\infty$.

 By \Cref{lem:maxsresult}, the sequence $(\phi_{n_m}(X))_{m=1}^\infty$ converges to $\xi_{w,X}$ in the Gardiner--Masur compactification, and the above argument shows that the limit is contained in $\Pi_X^{-1}(\psi)$.
\end{proof}

In view of the above results, in order to find boundary points outside the closure of Busemann points, it suffices to find a multicurve $\bigcup \gamma_j$ for which 
\[
\EL\left(\sum_{j=1}^k x_j \gamma_j,X\right)
\]
is not a homogeneous polynomial of degree 2 in the $x_j$'s. The following example with two curves on a five-times punctured sphere was given in \cite[Lemma 7.2]{azemar2021qualitative}.

\begin{lem}[{\cite[Lemma 7.2]{azemar2021qualitative}}]\label{le:baseExample}
Let $X$ be a five-times punctured sphere which admits an anti-conformal involution $J$ fixing the five punctures and let $\alpha$ and $\beta$ be disjoint, non-homotopic, essential simple closed curves on $X$ that are both invariant under $J$. Then the function
\[t\to \EL\left(\alpha+t\beta,X\right)\]
defined for $t\geq 0$ is not $C^2$ at \(t=0\).
\end{lem}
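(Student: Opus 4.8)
The plan is to reduce the statement to a first-order assertion about heights and then to exploit the anti-conformal symmetry to make the relevant quantities completely explicit. Write $f(t) = \EL(\alpha + t\beta, X)$ and let $\psi_t \in Q(X)$ be the Hubbard--Masur (equivalently Jenkins--Strebel) differential with $V(\psi_t) = \alpha + t\beta$, so that $f(t) = \|\psi_t\|$. For $t > 0$ the vertical foliation of $\psi_t$ consists of two cylinders, with core curves $\alpha$ and $\beta$ and transverse widths $1$ and $t$. By the standard first-variation formula for extremal length (Gardiner), the derivative of $f$ is a multiple of the $\psi_t$-height of $\beta$: one has $f'(t) = 2\,\ell_\beta(t)$ where $\ell_\beta(t) = i(\beta, H(\psi_t))$ is the length of the core of the $\beta$-cylinder. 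Thus it suffices to prove that $\ell_\beta$ fails to be $C^1$ at $t = 0$, which is cleaner than estimating a second derivative of $f$ directly.

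Next I would make everything explicit using $J$. Realize $X$ as $\widehat{\CC} \setminus \{p_1, \dots, p_5\}$ with $p_1 < \dots < p_5$ on $\RR$ and $J(z) = \bar z$, with $\alpha$ enclosing $\{p_1, p_2\}$ and $\beta$ enclosing $\{p_4, p_5\}$. Every integrable holomorphic quadratic differential on $X$ has the form $c\,(z - z_0)\prod_i (z - p_i)^{-1} dz^2$, and $J$-invariance forces $c, z_0 \in \RR$; in particular $\psi_t$ is of this shape with real parameters $c(t), z_0(t)$. Its square root is a real differential on the genus-two hyperelliptic curve $y^2 = (z - z_0)\prod_i (z - p_i)$, and the data $w_\alpha = 1$, $w_\beta = t$, $\ell_\alpha(t)$, $\ell_\beta(t)$ are the real and imaginary parts of its periods over the cycles carrying $\alpha$ and $\beta$. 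These are explicit real (hyper)elliptic integrals, depending real-analytically on $(c, z_0)$ as long as the six branch points stay distinct.

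The heart of the matter is the degeneration as $t \to 0^+$. At $t = 0$ the differential $\psi_0$ is the Jenkins--Strebel differential of $\alpha$ alone: a single cylinder around $\{p_1, p_2\}$, simple poles at all five punctures, and one simple zero $z_0(0)$ at an interior point (the degree count forces exactly one simple zero). Crucially $z_0(t)$ does not collide with any $p_i$, so the hyperelliptic curve stays smooth; the non-smoothness of $f$ can therefore only come from the way the two-cylinder combinatorial type is born at $t = 0$, i.e. from the passage between the two-cylinder and one-cylinder Strebel strata. As $t \to 0^+$ the $\beta$-cylinder collapses onto a chain of vertical saddle connections emanating from the $3\pi$-cone point $z_0(0)$, and this transition is not smooth: the cone geometry of the simple zero forces $z_0(t) - z_0(0)$, and hence $\ell_\beta(t) = L + (\text{a non-differentiable term}) + \cdots$ with $L = \ell_\beta(0) > 0$, to carry a non-$C^1$ contribution, the natural candidate being of order $t^{1/2}$. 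Granting a non-$C^1$ term of this type, $f(t) = \EL(\alpha, X) + 2Lt + (\text{a term that is not } C^2) + \cdots$ (e.g. of order $t^{3/2}$), so that $f''(t) \to \infty$ as $t \to 0^+$ and $f$ is not $C^2$ at $0$, even though it is $C^1$ and real-analytic for $t > 0$.

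The main obstacle is exactly this last step: extracting the leading non-smooth term of $\ell_\beta$ and proving that its coefficient does not vanish. This is where the involution $J$ is indispensable. It reduces the computation to real (hyper)elliptic integrals over the fixed real configuration $p_1 < \dots < p_5$, it pins the zero $z_0(t)$ to the real axis so that opening the $\beta$-cylinder is governed by a single real degeneration rather than a complex one, and it prevents the cancellation of the singular contributions from the two separatrices bounding the $\beta$-cylinder that could occur in the absence of symmetry. Concretely, one must analyze the saddle-connection integral at the degenerating configuration near $z_0(0)$, identify the precise non-differentiable term, and verify by a residue- or monotonicity-type argument that its coefficient is nonzero; ruling out an accidental vanishing of this coefficient is the delicate point.
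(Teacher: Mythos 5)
The paper does not prove this lemma; it imports it verbatim from \cite[Lemma 7.2]{azemar2021qualitative}, so the only ``paper's own proof'' is that citation, and your attempt has to be judged on its own merits. Judged that way, it is a plausible plan rather than a proof. Your reduction, via the first-variation formula $\frac{d}{dt}\EL(\alpha+t\beta,X)=2\,i(\beta,H(\psi_t))$, to showing that the flat length of the core of the nascent $\beta$-cylinder fails to be $C^1$ at $t=0$, is a sensible first step (it follows from the envelope theorem applied to $\EL(\alpha+t\beta,X)=\sup_\rho\,(\ell_\rho(\alpha)+t\,\ell_\rho(\beta))^2/\operatorname{Area}(\rho)$ evaluated at the extremal metric $|\psi_t|$, using $c_\alpha+t\,c_\beta=\|\psi_t\|$, though you should cite or prove it, since differentiability at the boundary weight $t=0$ is precisely the delicate regime). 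But everything after that is asserted, not established: the existence of a non-$C^1$ term in $\ell_\beta(t)$, its order, and the nonvanishing of its coefficient. You say so yourself (``granting a non-$C^1$ term of this type\ldots'', ``ruling out an accidental vanishing of this coefficient is the delicate point''). That claim \emph{is} the lemma; with it granted there is nothing left to prove, and without it the argument does not get off the ground.

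Two further concrete concerns. First, the guessed exponent $t^{1/2}$ is not obviously right. As $t\to 0^+$ the $\beta$-cylinder collapses onto vertical saddle connections, and a horizontal cross-cut of that cylinder degenerates at the simple zero $z_0$, where $\int_{z_0}^{x}\sqrt{\psi}\sim C\,(x-z_0)^{3/2}$; the natural scale of the degeneration is therefore $t^{2/3}$ rather than $t^{1/2}$, and a $t^2\log(1/t)$ term in $\EL$ would equally well destroy $C^2$ regularity while requiring a different local analysis. The fact that the heuristic does not even pin down the exponent is a symptom of the missing computation. Second, the role of $J$ in your sketch is descriptive (``it prevents cancellation'') rather than operative: to complete the proof one must actually write the widths $1$ and $t$ and the circumferences as real (hyper)elliptic integrals over the configuration $p_1<\dots<p_5$ with $z_0(t)\in\RR$, expand the degenerating period at the critical position of $z_0$, and verify the nonvanishing of the singular coefficient. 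That computation is the content of \cite[Lemma 7.2]{azemar2021qualitative}, and it is absent here.
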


This example can be exported to Teichm\"uller spaces of more complicated surfaces via covering constructions, as done in \cite[Theorem 1.8]{azemar2021qualitative}, to yield horofunctions outside the closure of Busemann points. We recall the reasoning here.

In what follows, we write $\overline{S_{g,p}}$ for the genus $g$ surface obtained by filling in the $p$ punctures of $S_{g,p}$, which then become marked points.

\begin{defn}
A branched cover $\overline{S_{g,p}}\to \overline{S_{h,q}}$ is \emph{admissible} if it sends marked points to marked points and is branched (i.e., not locally injective) at all unmarked preimages of marked points.
\end{defn}

We will need the fact that extremal length behaves well under admissible branched covers, as proved in \cite[p.1899--1900]{fortier2019Divergent} (see also \cite[Lemma 4.1]{bolza} and \cite[Lemma 7.3]{azemar2021qualitative}).

\begin{lem} \label{lem:Elcovers}
Let $\pi:\overline{S_{g,p}}\to \overline{S_{h,q}}$ be an admissible branched cover of degree $d$ and let $\iota_\pi:\Teich(S_{h,q})\hookrightarrow \Teich(S_{g,p})$ be the induced isometric embedding obtained by pulling back complex structures. Then for any measured foliation $F$ on $S_{h,q}$ and any $X\in \Teich(S_{h,q})$, we have
\[
\EL(\pi^{*}(F),\iota_\pi(X))=d \cdot \EL(F,X).
\]
\end{lem}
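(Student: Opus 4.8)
The plan is to pull back the extremal quadratic differential from downstairs and invoke the uniqueness half of the Hubbard--Masur theorem. Write $q_F\in Q(X)$ for the unique quadratic differential with $V(q_F)=F$, so that $\EL(F,X)=\norm{q_F}$ by definition. I would then establish three facts about the pullback $\pi^{*}q_F$: that it is an integrable holomorphic quadratic differential on $\iota_\pi(X)$, i.e.\ lies in $Q(\iota_\pi(X))$; that its vertical foliation equals $\pi^{*}(F)$; and that its norm equals $d\norm{q_F}$. Granting these, uniqueness of the Hubbard--Masur correspondence on $\iota_\pi(X)$ forces $\pi^{*}q_F=q_{\pi^{*}(F)}$, whence $\EL(\pi^{*}(F),\iota_\pi(X))=\norm{\pi^{*}q_F}=d\norm{q_F}=d\,\EL(F,X)$, as claimed.

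The part requiring care—and the only place admissibility is used—is checking that $\pi^{*}q_F$ is holomorphic with at worst simple poles at the punctures of $S_{g,p}$ and no poles elsewhere. Away from branch points this is immediate, since $\pi$ is locally biholomorphic there. At a branch point $p$ of ramification index $e\ge 2$ I would compute in natural coordinates $w=z^{e}$, where $dw^{2}=e^{2}z^{2e-2}\,dz^{2}$. If $\pi(p)$ is a regular point or a zero of $q_F$, the factor $z^{2e-2}$ only raises the order of vanishing, so $\pi^{*}q_F$ is holomorphic at $p$. The delicate case is when $\pi(p)$ is a marked point, where $q_F$ may have a simple pole, say $q_F=(c/w+O(1))\,dw^{2}$; the pole term then pulls back to $c\,e^{2}z^{e-2}+O(z^{2e-2})$. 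At a marked preimage ($p$ a puncture of $S_{g,p}$) a simple pole is permitted anyway, while at an unmarked preimage admissibility guarantees $e\ge 2$, so $z^{e-2}$ is holomorphic and no genuine pole appears at the non-puncture $p$. This is exactly the step that would fail without branching: an unmarked, unramified preimage of a marked point would carry a simple pole of $\pi^{*}q_F$ at a point that is not a puncture, taking it out of $Q(\iota_\pi(X))$.

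For the norm I would invoke the change-of-variables formula for the degree-$d$ branched cover. Since $\abs{\pi^{*}q_F}=\pi^{*}\abs{q_F}$ as area measures and the branch locus is finite (hence of measure zero), $\pi$ is $d$-to-$1$ almost everywhere, so
\[
\norm{\pi^{*}q_F}=\int_{S_{g,p}}\pi^{*}\abs{q_F}=d\int_{S_{h,q}}\abs{q_F}=d\norm{q_F}.
\]
Finally, the vertical foliation is compatible with pullback: the leaves of $V(\pi^{*}q_F)$ are the $\pi$-preimages of the leaves of $V(q_F)$, and locally $\sqrt{\pi^{*}q_F}=\pi^{*}\sqrt{q_F}$ gives transverse measure $\abs{\re\sqrt{\pi^{*}q_F}}=\pi^{*}\abs{\re\sqrt{q_F}}$, so $V(\pi^{*}q_F)=\pi^{*}(V(q_F))=\pi^{*}(F)$. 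Assembling the three facts as in the first paragraph completes the argument. I expect the pole analysis over marked points to be the sole genuine obstacle; the norm identity and the foliation compatibility are routine bookkeeping once the local picture at branch points is understood.
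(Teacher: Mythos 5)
Your proposal is correct and is essentially the standard argument behind the references the paper cites for this lemma: pull back the Hubbard--Masur differential $q_F$, check via the local model $w=z^{e}$ that admissibility is exactly what prevents a simple pole from surviving at an unmarked preimage of a marked point, compute $\norm{\pi^{*}q_F}=d\norm{q_F}$ by the $d$-to-$1$ change of variables, and conclude $\pi^{*}q_F=q_{\pi^{*}(F)}$ by uniqueness in the Hubbard--Masur theorem. The only blemish is notational (the pulled-back pole term should carry a $dz^{2}$); the mathematics is complete.
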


That an admissible branched cover induces an isometric embedding between Teichm\"uller spaces is shown for example in \cite[Theorem 5]{toy}. 

An immediate corollary of \Cref{lem:Elcovers} is that the example from \Cref{le:baseExample} can be lifted via admissible branched covers. The existence of admissible branched covers $\overline{S_{g,p}}\to \overline{S_{0,5}}$ for every non-negative integers $g$ and $p$ such that $3g+p>4$ was shown in \cite[Lemma 7.1]{GekhtmanMarkovic}, and this suffices to show that Busemann points are not dense in the horoboundary of $\Teich(S_{g,p})$.

However, to show that the set $\calB$ of Busemann points is nowhere dense in the horoboundary $H=\partial_h \Teich(S_{g,p})$, we need to prove that the complement of $\overline{\calB}$ in $H$ is dense in $H$. Hence, we need to find enough points in $H \setminus \overline{\calB}$ to accumulate everywhere. Our strategy for this is to find a $1$-parameter family of simplices $\Delta_s \subset H \setminus \overline{\calB}$ which accumulates everywhere on a simplex $\Delta \subset \calB$ of Busemann points corresponding to Jenkins--Strebel differentials as $s \to \infty$. We then show that the mapping class group orbit of this simplex $\Delta$ accumulates onto all the Busemann points provided that it has maximal dimension $3g+p-4$, or equivalently,  corresponds to Jenkins--Strebel differentials whose core curves form a maximal multicurve.

For this strategy to work, we need to choose admissible branched covers $\pi:\overline{S_{g,p}}\to \overline{S_{0,5}}$ such that the pullback $\pi^*(\alpha + \beta)$ is a weighted maximal multicurve, which we do in the next section.

\section{Simplices outside the closure} \label{sec:simplices}

As stated above, our goal is to find,  whenever $3g+p>4$, an admissible branched cover $\overline{S_{g,p}}\to \overline{S_{0,5}}$ such that the pullback of the pants decomposition $\alpha+\beta$ of $S_{0,5}$ from \Cref{le:baseExample} forms a weighted pair of pants decomposition on $S_{g,p}$.

\begin{lem}\label{lem:preciselift}
Suppose that $3g+p>4$ and let $P_5$ be a pants decomposition of $S_{0,5}$. Then there exists an admissible branched cover $\pi : \overline{S_{g,p}}\to \overline{S_{0,5}}$  such that $\pi^{*}(P_5)$ is measure-equivalent to a pants decomposition of $S_{g,p}$ with positive weights.
\end{lem}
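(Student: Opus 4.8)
Since any two pants decompositions of $S_{0,5}$ differ by a mapping class, and since precomposing an admissible cover with a homeomorphism of $\overline{S_{0,5}}$ preserves admissibility and transports pullbacks accordingly, it suffices to treat one fixed $P_5$. Write $P_5 = \alpha \cup \beta$. As every simple closed curve on the sphere separates, $\alpha$ and $\beta$ cut $\overline{S_{0,5}}$ into three pieces: two disks $R_1$ and $R_3$ bounded by $\alpha$, respectively $\beta$, each containing two marked points, and an annulus $R_2$ bounded by $\alpha \cup \beta$ containing the fifth marked point. Because $S_{g,p} \setminus \pi^{-1}(\alpha \cup \beta) = \pi^{-1}(R_1) \sqcup \pi^{-1}(R_2) \sqcup \pi^{-1}(R_3)$, my plan for the bulk of the cases is to build an admissible cover each of whose region-preimages is a disjoint union of pairs of pants. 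Then every lift of $\alpha$ or $\beta$ is essential and no two are isotopic, since two isotopic curves would cobound an annulus while every complementary region is a pair of pants; hence $\pi^{-1}(\alpha \cup \beta)$ is a genuine pants decomposition and $\pi^*(P_5)$ gives each of its curves weight $1$.

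I would specify the cover by its monodromy, a transitive tuple $\sigma_1, \dots, \sigma_5 \in \sym_d$ with $\sigma_1 \cdots \sigma_5 = \id$, and declare over each marked point which preimages are punctures of $S_{g,p}$; admissibility then only requires every unmarked preimage of a marked point to be ramified, which I arrange by leaving unmarked exactly the preimages lying in cycles of length at least $2$. The Riemann existence theorem realizes such a tuple by an honest branched cover, and the topology of each region-preimage is read off from the permutations. Over the disks $R_1$ and $R_3$, restricting to cycles of length at most two over the four corresponding marked points keeps every component planar, and such a component is a pair of pants precisely when it has genus zero and three holes (boundary circles over $\alpha$ or $\beta$, together with punctures), which constrains the admissible cycle structure. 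Over the annulus $R_2$ the dichotomy is sharper: a component covering $R_2$ with degree $e$ and branched only over the central marked point can be a pair of pants only for $e \in \{1,2\}$, namely a homeomorphic copy carrying one puncture ($e=1$), or a double cover ramified once over that marked point and carrying no puncture there ($e=2$). The second, branched type is what lets me add ramification, and hence genus, without creating punctures. I would then fix the genus through Riemann--Hurwitz, $2 - 2g = 2d - \sum_{i=1}^5 (d - c_i)$ with $c_i$ the number of cycles of $\sigma_i$, fix the number of punctures through the marking choices, and choose the gluing permutations so that $\langle \sigma_1, \dots, \sigma_5 \rangle$ is transitive to secure connectivity.

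The step I expect to be the main obstacle is matching the prescribed pair $(g,p)$ exactly, because the requirement that every region-preimage be a pair of pants is rigid. Each of the three regions contributes at least one pair of pants, so this clean construction needs $2g - 2 + p \ge 3$, that is $2g + p \ge 5$; for such $(g,p)$ I expect the degree $d$ and the distribution of ramification (in particular the $e=2$ pieces over $R_2$) to provide enough freedom to realize the required genus and puncture count, and this is where most of the combinatorial bookkeeping will go. Exactly two pairs satisfy the hypothesis $3g + p > 4$ while failing $2g + p \ge 5$, namely $(g,p) = (1,2)$ and $(2,0)$, and these have too few pairs of pants for any such cover. For them I would instead allow some lifts of $\alpha$ or $\beta$ to be parallel or inessential, so that the coalescing and discarded curves let the essential classes of $\pi^*(P_5)$ still form a maximal multicurve even though a region-preimage may now contain an annulus or a disk; this yields a pants decomposition carrying a curve of weight greater than $1$, which is precisely the outcome the statement permits by asking only that $\pi^*(P_5)$ be measure-equivalent to a pants decomposition with positive weights. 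In all cases I would conclude by checking that the weights are positive, that the support is a maximal multicurve, and that admissibility holds by construction.
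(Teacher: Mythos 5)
Your framework --- prescribing the cover by monodromy data $\sigma_1,\dots,\sigma_5\in\sym_d$, invoking the Riemann existence theorem, and arranging that each component of $\pi^{-1}(R_i)$ be a pair of pants so that the lifts of $\alpha$ and $\beta$ automatically form a pants decomposition --- is a legitimate and genuinely different route from the paper, which instead builds $\pi$ as an iterated composition of degree-$2$ covers branched along arc systems compatible with the pants decomposition, tracking at each stage how the complementary pairs of pants lift. Several of your structural observations are sound: non-isotopy of the lifted curves does follow from all complementary regions being pairs of pants; admissibility is correctly secured by unmarking exactly the ramified preimages; and the count $2g-2+p\ge 3$ correctly isolates $(g,p)=(1,2)$ and $(2,0)$ as the cases where the ``clean'' version cannot work.

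However, there is a genuine gap: the entire content of the lemma is the \emph{existence}, for every $(g,p)$ with $3g+p>4$, of an actual cover with the required properties, and this is precisely the step you defer. You write that you ``expect the degree $d$ and the distribution of ramification \dots to provide enough freedom to realize the required genus and puncture count'' and that ``this is where most of the combinatorial bookkeeping will go'' --- but that bookkeeping \emph{is} the proof. No permutation tuples are exhibited, no induction or parametrized family is given, and it is not verified that transitivity, the Riemann--Hurwitz constraint, the marking choices, and the pair-of-pants condition on every region-preimage can be met simultaneously for all admissible $(g,p)$. The exceptional cases $(1,2)$ and $(2,0)$ are likewise only gestured at (``I would instead allow some lifts to be parallel or inessential''), and here extra care is needed: admissibility forces unramified preimages of marked points to be punctures, which already rules out some of the degenerate components you might want (for instance, a degree-$1$ component over $R_1$ is automatically a twice-punctured disk, hence a pair of pants), so producing parallel or inessential lifts requires specific ramification patterns that must be written down and checked. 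By contrast, the paper's proof gives an explicit recursion $B(p)\Rightarrow B(2p-4),\,B(2p-3)$ for the planar cases and an explicit arc-counting construction for positive genus, which covers every pair $(g,p)$, including $(1,2)$ and $(2,0)$, uniformly; your argument needs an analogous explicit realization to be complete.
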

\begin{proof}
The idea of the proof, similarly as in \cite[Lemma 7.1]{GekhtmanMarkovic}, is to construct $\pi$ inductively as a composition of admissible branched covers of degree $2$. If $S$ is a punctured surface and $A \subset S$ is a collection of disjoint simple proper arcs, then we can construct a covering map $f:R \to S$ of degree $2$ by taking two copies of $S$ cut along $A$ and gluing each side of each slit in one copy to the other side of the same slit in the other copy. The resulting surface $R$ is connected if and only if the union of the arcs does not separate $S$. In that case, the genera of $R$ and $S$ satisfy $g(R)=g(S)+|A|-1$. The map $f$ extends to an admissible branched cover $F$ between the compactifications. We can further unmark the preimages in $\overline{R}$ of marked points in $\overline{S}$ that are adjacent to only one arc in $A$ since $F$ is not locally injective at such points. 

Let $P$ be a set of curves that form a pants decomposition of $S$. We say that $A$ is \emph{compatible} with $P$ if the extremities of the arcs in $A$ are all distinct and each arc of $A$ intersects at most one curve in $P$ and does so at most once (transversely). In that case, $F^*(P)$ is measure-equivalent to a pants decomposition of the surface $T$ obtained by removing the marked points from $\overline{R}$, provided that we have unmarked the preimages of both extremities of each arc that intersects $P$ and the preimage of at least one extremity of each arc that is disjoint from $P$. Indeed, each pair of pants in $S\setminus P$ disjoint from $A$ lifts to two disjoint pairs of pants in $T$. If a pair of pants contains an arc $\alpha$ in $A$, then its preimage in $\overline{R}$ a cylinder. If we unmark the preimages of both extremities of $\alpha$, then the cylinder is contained in $T$, hence its two boundary curves are homotopic and can be merged together into a single curve with twice the weight.  If we unmark the preimage of only one extremity of $\alpha$, then the cylinder becomes a $1$-cusped pair of pants in $T$.  Finally, if a pair of pants $\Pi$ intersects an arc in $A$ but does not contain it, then the hypothesis that $A$ is compatible with $P$ implies that $\Pi \cap A$ is an arc between a puncture and a pants curve. In that case, the double branched cover of $\Pi$ becomes a pair of pants after unmarking the branch point, similarly as in the previous case. Therefore, after merging homotopic components together, all complementary regions become pairs of pants. All three cases are illustrated in \Cref{fig:lifts}.

\begin{figure}
\centering
\includegraphics[width=1\textwidth]{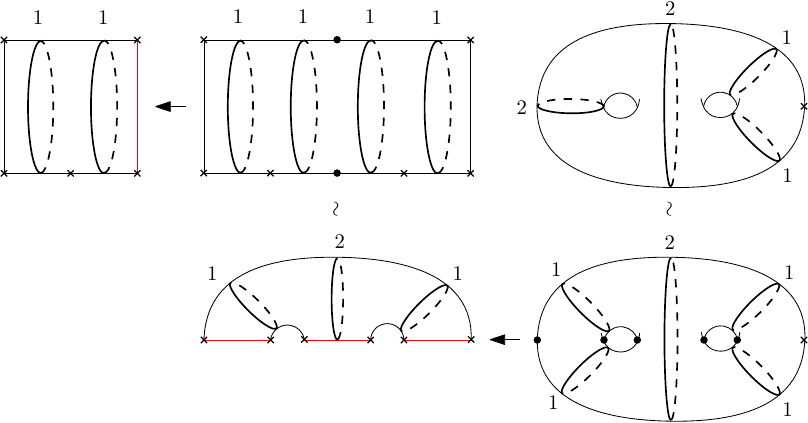}
\caption{Some double branched covers arising in the proof of \Cref{lem:preciselift}. Marked points are indicated by crosses and unmarked branched points by dots. The arc systems used to define the double branched covers are shown in red. The numbers indicate weights on curves.}
\label{fig:lifts}
\end{figure}

To prove the lemma, we first show that for each $p\geq 5$, there exists an admissible branched cover $\pi : \overline{S_{0,p}} \to \overline{S_{0,5}}$ such that $\pi^*(P_5)$ is measure-equivalent to a pants decomposition $P$ which divides $S_{0,p}$ into two $2$-cusped pants connected by a sequence of $1$-cusped pants. This statement, call it $B(p)$, is clearly true for $p=5$. Now suppose $B(p)$ is true for some $p\geq 5$ and let $\pi_p$ be the associated branched cover and let $P_p$ be the associated pants decomposition. Now take a proper simple arc $\alpha$ in $S_{0,p}$ disjoint from $P_p$ (contained in one of the two $2$-cusped pants) and construct the associated double branched cover $F$ as above. Since the arc system $A=\{\alpha\}$ is compatible with $P_p$, we have that $F^*(P_p)$ is measure-equivalent to a pants  decomposition $Q$ on the double cover minus its marked points, which is either $S_{0,2p-4}$ or $S_{0,2p-3}$ depending on how many points we unmark. Lastly, the arguments of the previous paragraph imply that $Q$ still divides this surface into two $2$-cusped pants connected by a sequence of $1$-cusped pants. The composition $ \pi:= \pi_p \circ F$ then shows that $B(2p-4)$ and $B(2p-3)$ are true. Thus if $B(p)$ is true for $p = 4 + 2^j$ to $p=3+ 2^{j+1}$, then it is also true for $p = 2(4+2^j)-4 = 4+ 2^{j+1}$ to $p=2(3+2^{j+1})-3 = 3+2^{j+2}$, which covers all the integers $p\geq 5$ when we start with $j=0$.

For any $q\geq 5$ and a pants decomposition $Q$ on $S_{0,q}$ obtained as above, let $\overline{A}$ be an arc system on $S_{0,q}$ consisting of the two proper arcs joining the two punctures in each of the two $2$-cusped pants, together with $\lfloor(q-4)/2\rfloor$ arcs joining punctures of consecutive $1$-cusped pants. This arc system is compatible with $Q$ and has cardinality $\lfloor q/2 \rfloor$. If $A$ is any subset of $\overline{A}$ of cardinality $k \leq \lfloor q/2 \rfloor$ which contains at least one of the first two arcs, then the double cover construction applied to $A$ yields a surface of genus $k-1$ with $2(q-2k)$ marked points if we unmark all the branch points, or $2(q-2k)+1$ marked points if we keep one branch point marked. Once again, the composition of this branched cover with the previous branched cover $\pi_q: \overline{S_{0,q}} \to \overline{S_{0,5}}$ shows that the statement of the lemma is satisfied for $(g,p) = (k-1,2(q-2k))$ and $(g,p) = (k-1,2(q-2k)+1)$.

We can now finish the proof. Let $g\geq 0$ and $p\geq 0$ be such that $3g+p \geq 5$. If $g=0$, then we are done since $B(p)$ it true, so we may assume that $g\geq 1$. If $p$ is even, write $p=2n$ and let $q := n+2(g+1)$, which is at least $5$ since $(g,p)\neq (1,0)$, and let $k := g+1 \leq \lfloor q/2 \rfloor$. By the previous paragraph, the statement of the lemma is satisfied for this pair $(g,p)$. If $p$ is odd, write $p=2n+1$ and let $q:=n+2(g+1)$ and $k:=g+1$. Once again $q\geq 5$ since $(g,p)\neq (1,1)$ and we still have $k \leq \lfloor q/2 \rfloor$, so the above reasoning shows that the pair $(g,p)$ can be obtained.
\end{proof}

Using these particular branched covers and the results of the previous section, we obtain simplices of large dimension outside the closure of Busemann points.

\begin{cor}\label{co:outsideclosure}
 Suppose that $3g+p>4$. Then there is a a weighted pair of pants decomposition $\gamma=\sum_{j=1}^k \lambda_i \gamma_i$ on $S_{g,p}$ and a non-empty totally geodesic subset $G \subset \Teich(S_{g,p})$ such that for any $X \in G$, and any vector $w = (w_1,\ldots,w_k)$ of positive weights, the limit point $[\xi_{w,X}]$ from \Cref{lem:maxsresult} is not in the closure of the set of Busemann points. Furthermore, for any $X\in G$, the Teichm\"uller ray in the direction of the quadratic differential $q \in Q(X)$ with $V(q)= \gamma$ is contained in $G$.
\end{cor}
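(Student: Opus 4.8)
The plan is to transplant the example of \Cref{le:baseExample} from the five-punctured sphere to $S_{g,p}$ through a suitable branched cover, and to take $G$ to be the image of a symmetric locus. First I would fix a pants decomposition $P_5=\alpha\cup\beta$ of $S_{0,5}$ together with an anti-conformal involution $J$ fixing the five punctures and leaving both $\alpha$ and $\beta$ invariant; such a configuration exists by placing the five marked points on the fixed circle of a reflection of the sphere. Applying \Cref{lem:preciselift} to this $P_5$ yields an admissible branched cover $\pi\colon\overline{S_{g,p}}\to\overline{S_{0,5}}$ of some degree $d$ for which $\gamma:=\pi^{*}(\alpha+\beta)$ is measure-equivalent to a weighted pants decomposition $\sum_{j=1}^{k}\lambda_j\gamma_j$ of $S_{g,p}$; this is the multicurve in the statement. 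Writing $A$ and $B$ for the indices $j$ with $\gamma_j$ lying over $\alpha$ and over $\beta$ respectively, we have $\pi^{*}\alpha=\sum_{j\in A}\mu_j\gamma_j$ and $\pi^{*}\beta=\sum_{j\in B}\nu_j\gamma_j$ with positive $\mu_j,\nu_j$. I would then set $G_0:=\Fix(J_*)\subset\Teich(S_{0,5})$, the fixed locus of the anti-holomorphic involution induced by $J$, and $G:=\iota_\pi(G_0)$. The set $G_0$ is non-empty (it contains the symmetric surface) and, being the fixed set of an isometric involution of a uniquely geodesic space, is totally geodesic; since $\iota_\pi$ is an isometric embedding carrying Teichm\"uller geodesics to Teichm\"uller geodesics, its image is totally geodesic, and hence so is $G$.

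The heart of the matter is to show that $[\xi_{w,X}]\notin\overline{\calB}$ for every $X\in G$ and every positive weight vector $w$. Fix $X=\iota_\pi(Y)$ with $Y\in G_0$ and consider the function $\Phi(y_1,\dots,y_k):=\EL(\sum_j y_j\gamma_j,X)$ on $\RR_{\geq0}^k$. Restricting $\Phi$ to the ray $y_j=\mu_j$ for $j\in A$ and $y_j=t\nu_j$ for $j\in B$ gives $\EL(\pi^{*}(\alpha+t\beta),X)$, which equals $d\cdot\EL(\alpha+t\beta,Y)$ by \Cref{lem:Elcovers}. As $Y$ admits $J$ and $\alpha,\beta$ are $J$-invariant, \Cref{le:baseExample} shows this function of $t$ is not $C^2$ at $t=0$; since homogeneous quadratic polynomials are smooth, $\Phi$ is not a homogeneous polynomial of degree $2$. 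On the other hand, \Cref{le:pointinfiber} places $[\xi_{w,X}]$ in $\Pi_X^{-1}(\psi)$ for a quadratic differential $\psi$ whose vertical foliation is a positive combination of the $\gamma_j$, so that the indecomposable components of $V(\psi)$ are exactly the cylinders with cores $\gamma_j$. Were $[\xi_{w,X}]$ in $\overline{\calB}$, then \Cref{pr:closurecharacterization} would force $\xi_{w,X}^2$ to be a homogeneous degree-$2$ polynomial in the variables $x_j=i(\gamma_j,\cdot)/i(\gamma_j,H(\psi))$. But $\xi_{w,X}(F)^2=\EL(\sum_j w_j\,i(\gamma_j,F)\gamma_j,X)=\Phi(a_1x_1(F),\dots,a_kx_k(F))$ with $a_j=w_j\,i(\gamma_j,H(\psi))>0$, and as $F$ ranges over $\MF(S_{g,p})$ the tuple $(i(\gamma_j,F))_j$ attains every value in $\RR_{\geq0}^k$; this would make $\Phi$ a homogeneous degree-$2$ polynomial after the rescaling $y_j\mapsto y_j/a_j$, contradicting the previous sentence. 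Hence $[\xi_{w,X}]\notin\overline{\calB}$.

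For the final assertion, fix $X=\iota_\pi(Y)\in G$ and let $q\in Q(X)$ be the differential with $V(q)=\gamma$. Since $\gamma=\pi^{*}(\alpha+\beta)$ and pullback of natural coordinates sends vertical leaves to vertical leaves, the Hubbard--Masur correspondence gives $q=\pi^{*}q_0$, where $q_0\in Q(Y)$ is the Jenkins--Strebel differential with $V(q_0)=\alpha+\beta$. Because pulling back the Teichm\"uller deformation commutes with $\iota_\pi$, we have $R_q(t)=\iota_\pi(R_{q_0}(t))$, so it suffices to check that $R_{q_0}(t)\in G_0$. The curves $\alpha,\beta$ are $J$-invariant and constitute the closed leaves of the cylinders of $V(q_0)$; since an anti-conformal map fixing a closed vertical leaf must preserve the vertical foliation, $J$ fixes $V(q_0)$, and by the uniqueness in the Hubbard--Masur theorem the differential $q_0$ itself is fixed by the $J$-action on $Q(Y)$. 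Therefore $J_*$ fixes the geodesic ray $R_{q_0}$ pointwise, giving $R_{q_0}(t)\in\Fix(J_*)=G_0$ and $R_q(t)\in G$ for all $t\geq0$.

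The step I expect to be the most delicate is this last one: one must confirm that $J_*$ fixes the \emph{ray} $R_{q_0}$ rather than exchanging it with its opposite, which comes down to verifying that the anti-conformal $J$ \emph{preserves} the vertical and horizontal foliations of $q_0$ instead of swapping them---a point that uses both the invariance of the core curves $\alpha,\beta$ and the cylindrical structure of $V(q_0)$. The compatibility $R_q=\iota_\pi\circ R_{q_0}$ and the total geodesy of $\iota_\pi(\Teich(S_{0,5}))$ likewise deserve a careful justification from the behaviour of quadratic differentials and their Teichm\"uller flows under the cover.
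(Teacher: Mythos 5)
Your proposal is correct and follows essentially the same route as the paper: the same symmetric locus $\Fix(J_*)\subset\Teich(S_{0,5})$ pushed forward by the isometric embedding $\iota_\pi$ coming from the branched cover of \Cref{lem:preciselift}, the same use of \Cref{le:pointinfiber} and \Cref{pr:closurecharacterization}, and the same contradiction obtained by restricting to the line $\alpha+t\beta$ via \Cref{lem:Elcovers} and \Cref{le:baseExample}. The only cosmetic difference is that you phrase the final step through the surjectivity of $F\mapsto(i(\gamma_j,F))_j$ onto $\RR_{\geq 0}^k$, where the paper explicitly constructs the foliations $F_t$ realizing the needed intersection numbers using \cite[Proposition 6.7]{FLP}.
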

\begin{proof}
Let $J:S_{0,5} \to S_{0,5}$ be an orientation-reversing involution  that fixes the five punctures and let $P_5 = \alpha + \beta$ be a pants decomposition whose components are both invariant under $J$. The subset $D \subset \Teich(S_{0,5})$ of conformal structures for which $J$ is homotopic to an anti-conformal homeomorphism is totally geodesic (it isometric to the space of conformal pentagons with labelled vertices). 

Let $\pi : \overline{S_{g,p}}\to \overline{S_{0,5}}$ be an admissible branched cover provided by \Cref{lem:preciselift}, let $d$ be its degree, and let $\iota_\pi:\Teich(S_{0,5})\hookrightarrow \Teich(S_{g,p})$ be the induced isometric embedding. By construction, $\pi^*(P_5)$ is measure-equivalent to a weighted pants decomposition $\gamma = \sum_{j=1}^k \lambda_j \gamma_j$ of $S_{g,p}$. After relabelling the curves if necessary, there is some $m$ such that $\pi^*(\alpha)=\sum_{j\le m} \lambda_j\gamma_j$ and $\pi^*(\beta) =\sum_{j> m} \lambda_j\gamma_j$.

Since $\iota_\pi$ is an isometric embedding, the set $G:=\iota_\pi(D)$ is totally geodesic in $\Teich(S_{g,p})$. Furthermore, for any $Y \in D$, the unique quadratic differential $\phi$ on $Y$ with vertical foliation equal to $\alpha + \beta$ is invariant under $J$, hence so is the Teichm\"uller ray in the direction of $\phi$, so the latter is contained in $D$. The image of that geodesic ray by $\iota_\pi$ is the Teichm\"uller ray in the direction of $q:=\pi^*\phi$, which has vertical foliation $\pi^*(\alpha+\beta) = \gamma$. This Teichm\"uller ray is thus contained in $G =\iota_\pi(D)$.

Now fix a vector of positive weights \(w=(w_1,\ldots,w_k)\) and an \(X=\iota_\pi(Y) \in G\). By \Cref{le:pointinfiber}, there is some \(\psi \in Q^1(X)\) with \(V(\psi)=\sum_{j=1}^k d_j \gamma_j\) for some positive $d_j$ such that \([\xi_{w,X}] \in \Pi_X^{-1}(\psi)\).

Suppose that \([\xi_{w,X}]\) is in the closure of Busemann points. Since \([\xi_{w,X}]\) belongs to \(\Pi_X^{-1}(\psi)\), the square \(\xi_{w,X}^2\) is a polynomial of degree $2$ in the variables \[x_j(\cdot)=\frac{i(\gamma_j,\cdot)}{i(\gamma_j,H(\psi))}\] according to \Cref{pr:closurecharacterization}. Writing \(\xi_{w,X}^2\) in term of these variables, we find that
\[\xi_{w,X}(F)^2=
\EL\left(\sum_j w_j x_j(F) i(\gamma_j,H(\psi))\gamma_j,Y \right)
\]
for every measured foliation $F$. However, we will see that the last expression is not smooth with respect to the variables \(x_j\). 

Recall that for any pants decomposition $\gamma = \sum_{j=1}^k  \gamma_j$ of a surface $S$ and any non-negative numbers $c_1,\ldots,c_k$, there is a measured foliation $F$ such that $i(\gamma_j,F)=c_i$ for every $c$. Indeed, one can construct $F$ in each pair of pants using \cite[Proposition 6.7]{FLP} and then glue the pieces together to get a global measured foliation. It follows that for each $t\geq 0$, there is a measured foliation \(F_t\) on $S_{g,p}$ such that \(x_j(F_t)=\frac{\lambda_j}{w_j i(\gamma_j,H(\tilde{q}))}\) for \(j\le m\) and \(x_j(F_t)=t\frac{\lambda_j}{w_j i(\gamma_j,H(\tilde{q}))}\) for $j>m$.

Observe that the $x_j$ depend affinely on $t$ (hence smoothly), yet
\begin{align*}
\xi_{w,X}(F_t)^2 &=
\EL\left(\sum_j w_j x_j(F_t) i(\gamma_j,H(\psi))\gamma_j,X\right) \\ 
&= \EL\left(\sum_{j \le m} \lambda_j \gamma_j + t\sum_{j > m} \lambda_j \gamma_j , X \right) \\
&= \EL\left(\pi^*(\alpha + t \beta) , \iota_\pi(Y) \right) \\
& = \EL(\alpha + t \beta , Y)
\end{align*}
is not smooth at $t=0$ by \Cref{le:baseExample}. It follows that \(\xi_{w,X}^2\) is not a smooth function of the $x_j$'s, hence is not a polynomial, which is a contradiction. We conclude that $[\xi_{w,X}]$ is not in the closure of the set of Busemann points.
\end{proof}

Observe that scaling $w$ does not change the projective class of the limit point $\xi_{w,X}$, hence for each fixed $X\in G$, the previous result yields an open simplex 
\[
\Delta_X := \left\{ [\xi_{w,X}] : w_i > 0 \text{ for all $i$ and } \sum_i w_i = 1 \right\}
\]
contained in $\partial_h \Teich(S_{g,p}) \setminus \overline{\calB}$. We now want to show that as $X$ tends to infinity along the Teichm\"uller ray $R_q$, the simplex $\Delta_X$ accumulates onto the whole simplex of Busemann points associated to quadratic differentials whose vertical foliations are reweighings of $\gamma$.

To state this more precisely, we need some more notation. For a unit quadratic differential $\phi$, Walsh showed in \cite[Corollary 1]{walsh2019asympgeom} that the Busemann point $B(\phi)$ corresponds to the projective class of 
\[
\calE_\phi(F) = \left(\sum_j \frac{i(G_j,F)^2}{i(G_j,H(\phi))}\right)^{1/2}
\]
in the Gardiner--Masur compactification, where the $G_j$ are the indecomposable components of $V(\phi)$. Our statement is then as follows.

\begin{prop}\label{pr:simplexoutsideclosure}
Suppose that $3g+p>4$. Let $G \subset \Teich(S_{g,p})$ and \(\gamma =\sum_{j=1}^k \lambda_j \gamma_j\) be as in \Cref{co:outsideclosure}, let $X\in G$, let $q\in Q^1(X)$ be such that $V(q)$ is proportional to $\gamma$, and let \(\phi \in Q^1(X)\) be such that $V(\phi)$ is a positive linear combination of the $\gamma_i$. Then there is a vector \(w=(w_1,\ldots,w_k)\) of positive weights such that
\[\lim_{s\to \infty} e^{s}\xi_{w,\trayt{q}{s}}(F)=\calE_\phi(F)\]
for every $F \in \MF(S_{g,p})$. In particular, $[\xi_{w,\trayt{q}{s}}]$ converges to the Busemann point $B(\phi)$ as $s\to \infty$, so that $B(\phi)$ is not in the interior of the closure of the set of Busemann points. 
\end{prop}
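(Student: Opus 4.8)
The plan is to reduce the statement to a single asymptotic estimate for the extremal length of the weighted multicurve $\sum_j\mu_j\gamma_j$ along the ray $\trayt{q}{s}$, and then to choose the weights $w_j$ so that the limit matches Walsh's formula. Writing $\mu_j=w_j\,i(\gamma_j,F)$, we have $e^{s}\xi_{w,\trayt{q}{s}}(F)=\bigl(e^{2s}\EL(\sum_j\mu_j\gamma_j,\trayt{q}{s})\bigr)^{1/2}$, so everything comes down to proving
\[
\lim_{s\to\infty}e^{2s}\EL\Bigl(\sum_j\mu_j\gamma_j,\trayt{q}{s}\Bigr)=\sum_j\frac{\mu_j^2}{m_j^q},
\]
where $m_j^q=\mathrm{mod}(C_j^q)$ is the modulus of the $j$-th cylinder of $q$ on $X$. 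Granting this, I would rewrite $\calE_\phi$: since $V(\phi)=\sum_j c_j\gamma_j$ has indecomposable components $G_j=c_j\gamma_j$, and since $c_j$ is the height and $i(\gamma_j,H(\phi))$ the circumference of the $\phi$-cylinder $C_j^\phi$, a short computation gives $\calE_\phi(F)^2=\sum_j m_j^\phi\,i(\gamma_j,F)^2$ with $m_j^\phi=\mathrm{mod}(C_j^\phi)$. Because the $i(\gamma_j,\cdot)$ can be prescribed independently on a pants decomposition, comparing the two diagonal quadratic forms forces the ($F$-independent) choice $w_j=\sqrt{m_j^\phi\,m_j^q}>0$, and then $e^{2s}\xi_{w,\trayt{q}{s}}(F)^2\to\calE_\phi(F)^2$ for every $F$.

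Because $V(q)$ is a weighted multicurve, $q$ is Jenkins--Strebel with cylinders $C_j$ of core $\gamma_j$; along the ray the heights scale by $e^{s}$ and the circumferences (which run in the vertical direction) by $e^{-s}$, so $C_j(s)\subset\trayt{q}{s}$ has modulus $e^{2s}m_j^q$, and the cylinders stay disjoint. The upper bound in the displayed limit is then clean: for any disjoint annuli $A_j$ with core $\gamma_j$ and modulus $M_j$, a length--area estimate in each $A_j$ gives $\int_{A_j}\rho^2\ge M_j\,\ell_\rho(\gamma_j)^2$ for every conformal metric $\rho$, whence, by Cauchy--Schwarz, $\ell_\rho(\sum_j\mu_j\gamma_j)^2\le\bigl(\sum_j\mu_j^2/M_j\bigr)\int\rho^2$; taking the supremum over $\rho$ yields $\EL(\sum_j\mu_j\gamma_j)\le\sum_j\mu_j^2/M_j$. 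Applying this with $A_j=C_j(s)$ and $M_j=e^{2s}m_j^q$ gives exactly $e^{2s}\EL(\sum_j\mu_j\gamma_j,\trayt{q}{s})\le\sum_j\mu_j^2/m_j^q$.

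For the matching lower bound I would test $\EL=\sup_\rho\ell_\rho^2/\mathrm{Area}_\rho$ against the piecewise-flat metric equal to a constant multiple $t_j^{1/2}$ of the natural flat metric on each $C_j(s)$. Its area is $\sum_j t_j\,\mathrm{Area}_{euc}(C_j)$ (independent of $s$), and if $\ell_\rho(\gamma_j)$ equals the core length $t_j^{1/2}e^{-s}\,\mathrm{circ}(C_j^q)$, then optimizing over $t_j$ (a Rayleigh quotient) returns precisely $e^{-2s}\sum_j\mu_j^2/m_j^q$, matching the upper bound. The main obstacle --- and essentially the only real work --- is to justify that $\ell_\rho(\gamma_j)$ is asymptotic to the core length, i.e. that a curve homotopic to $\gamma_j$ cannot meaningfully shorten itself by leaving $C_j(s)$. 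Here I would exploit the degeneration of the flat structure: as $s\to\infty$ the cylinders become long and thin while the critical graph (a union of vertical saddle connections) has total length $O(e^{-s})$, so the maximal annulus about $\gamma_j$ is asymptotic to $C_j(s)$ and the cylinders decouple; equivalently $\EL(\gamma_j,\trayt{q}{s})\sim e^{-2s}/m_j^q$. I would make this precise with a collar/comparison argument that forces competing curves into the thick middle of each cylinder by enlarging $\rho$ on the shrinking singular set, or by invoking standard thick--thin extremal length estimates.

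Finally, the pointwise convergence $e^{s}\xi_{w,\trayt{q}{s}}(F)\to\calE_\phi(F)$ for every $F\in\MF(S_{g,p})$ is convergence of representatives in the product topology, and since $\calE_\phi\neq 0$ it descends to projective convergence $[\xi_{w,\trayt{q}{s}}]\to[\calE_\phi]=B(\phi)$ in the Gardiner--Masur compactification. For the last assertion, observe that each point $\trayt{q}{s}$ lies in $G$ --- this is exactly the clause of \Cref{co:outsideclosure} guaranteeing that the ray in the direction of $q$ stays in $G$ --- so $[\xi_{w,\trayt{q}{s}}]\notin\Bclose$ for every $s$. Thus $B(\phi)$ is a limit of points in the complement of $\Bclose$ and therefore cannot lie in the interior of $\Bclose$.
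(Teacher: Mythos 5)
Your overall architecture matches the paper's: everything reduces to the single asymptotic
\[
\lim_{s\to\infty}e^{2s}\EL\Bigl(\textstyle\sum_j\mu_j\gamma_j,\trayt{q}{s}\Bigr)=\sum_j\mu_j^2/m_j^q,
\]
your identification $\calE_\phi(F)^2=\sum_j m_j^\phi\, i(\gamma_j,F)^2$ is correct, the resulting choice $w_j=\sqrt{m_j^\phi m_j^q}$ agrees with the paper's formula $w_j=\bigl(c\lambda_j d_j/(i(\gamma_j,H(q))\,i(\gamma_j,H(\phi)))\bigr)^{1/2}$, and your final paragraph (projective convergence, the ray staying in $G$, hence $B(\phi)\notin\Bclose^{\circ}$) is exactly the paper's conclusion. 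Your upper bound for the asymptotic --- the length--area estimate on the disjoint cylinders $C_j(s)$ of modulus $e^{2s}m_j^q$ followed by Cauchy--Schwarz --- is correct, holds for every $s$ rather than only in the limit, and is more elementary than the paper's route to the $\limsup$ bound (Minsky's equality combined with \Cref{th:walshlimit} and Titu's lemma).

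The gap is in the lower bound, which you rightly flag as ``the only real work'' but do not close; the paper does not prove this direction either, it cites Walsh's Lemma 4 (\Cref{le:walshlowerbound}), which is precisely the statement $\liminf_t e^{2t}\EL(\sum_j f_jG_j,\trayt{q}{t})\ge\sum_j f_j^2\, i(G_j,H(q))$. Your proposed test metric --- a constant multiple $t_j^{1/2}$ of the flat metric on each cylinder --- does not yield it as described, because the claim that $\ell_\rho(\gamma_j)$ is asymptotic to $t_j^{1/2}$ times the core length is false when the constants differ between adjacent cylinders: a closed curve freely homotopic to $\gamma_j$ can be pushed entirely out of $C_j(s)$ so that it runs just inside the cylinders adjacent to one boundary circle of $C_j(s)$; its flat length is still essentially the boundary circumference (which equals the core circumference, of order $e^{-s}$), but it is now weighted by the $t_i^{1/2}$ of the neighbouring cylinders rather than by $t_j^{1/2}$. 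At the Rayleigh-optimal choice of the $t_j$ these constants can differ by arbitrary factors between adjacent cylinders, so the quotient genuinely drops below $e^{-2s}\sum_j\mu_j^2/m_j^q$. Enlarging $\rho$ on the shrinking singular set does not repair this, since the shortcut avoids the singular set and lives in the interiors of the adjacent cylinders. Either cite Walsh's Lemma 4 for the lower bound, as the paper does, or be prepared for a genuinely more delicate argument (e.g.\ comparing the $q_s$-flat structure with the extremal Jenkins--Strebel metric for $\sum_j\mu_j\gamma_j$ on $\trayt{q}{s}$).
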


The proof requires finding the limit of the extremal length of a foliation when moving along a geodesic ray, which in turn requires the following results from Walsh's paper \cite{walsh2019asympgeom}.

\begin{lem}[{\cite[Lemma 4]{walsh2019asympgeom}}]\label{le:walshlowerbound}
Let \(q\) be a unit area quadratic differential with vertical indecomposable components \(G_1,\ldots,G_k\). Suppose that \(F=\sum f_j G_j\) for some \(f_j\ge 0\). Then,
\[\liminf_{t\to \infty} e^{2t}\EL(F,\trayt{q}{t})\ge\sum_j f_j^2 i(G_j,H(q)).\]
\end{lem}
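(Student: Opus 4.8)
The plan is to bound $\EL(F,\trayt{q}{t})$ from below by pairing $F$ against a well-chosen test foliation and invoking the standard Cauchy--Schwarz inequality for extremal length (Minsky's inequality)
\[
i(F,\mu)^2 \le \EL(F,X)\,\EL(\mu,X),
\]
valid for all measured foliations $F,\mu$ and all $X\in \Teich(S_{g,p})$. Rearranged, this reads $\EL(F,\trayt{q}{t}) \ge i(F,\mu)^2/\EL(\mu,\trayt{q}{t})$ for every $\mu$, so the whole problem reduces to choosing a good $\mu$ and estimating the two factors on the right along the ray. The key realization is that the $e^{2t}$ normalization is exactly what is needed: the vertical foliation shrinks like $e^{-2t}$ along $\trayt{q}{t}$, while its horizontal counterpart grows like $e^{2t}$, and these two rates will cancel to produce a finite positive limit.

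Accordingly, I would take $\mu$ to be the horizontal counterpart of $F$. For each $j$, let $Y_j$ be the subsurface filled by the indecomposable component $G_j$, let $q_j$ be the restriction of $q$ to $Y_j$ (so that $V(q_j)=G_j$ and $\norm{q_j}=i(G_j,H(q))=:m_j$ is the area of $Y_j$), and let $H_j:=H(q_j)$. I would then set $\mu=\sum_j f_j H_j$. The numerator is computed from the identity $i(G_j,H_j)=\norm{q_j}=m_j$ together with the vanishing of the cross terms $i(G_j,H_\ell)=0$ for $j\ne \ell$, which holds because distinct components are supported on subsurfaces with disjoint interiors; expanding bilinearly this yields $i(F,\mu)=\sum_j f_j^2 m_j$. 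The coefficients $f_j$ in $\mu$ are chosen to match those of $F$ because this is precisely the choice that makes the final Cauchy--Schwarz step sharp.

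For the denominator I would prove the clean bound $\EL(\mu,\trayt{q}{t})\le e^{2t}\sum_j f_j^2 m_j$, resting on two facts. First, extremal length is subadditive over foliations supported on disjoint subsurfaces: since $\ell_\rho$ is additive and areas satisfy $A_X(\rho)\ge \sum_j A_{Y_j}(\rho)$, the Cauchy--Schwarz inequality $\frac{(\sum_j a_j)^2}{\sum_j b_j}\le \sum_j \frac{a_j^2}{b_j}$ applied inside the length--area definition gives $\EL_X\!\big(\sum_j \mu_j\big)\le \sum_j \EL_{Y_j}(\mu_j)$. Second, along the ray the flat structure of $\trayt{q}{t}$ restricts on each $Y_j$ to the Teichm\"uller deformation $(q_j)_t$ of $q_j$; since the Hubbard--Masur differential with vertical foliation $H_j$ is $e^{2t}\bigl(-(q_j)_t\bigr)$, one gets $\EL_{Y_j}(H_j,\trayt{q}{t})=e^{2t}\norm{q_j}=e^{2t}m_j$. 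Summing over $j$ produces the claimed bound, and combining numerator and denominator gives
\[
e^{2t}\EL(F,\trayt{q}{t}) \ \ge\ e^{2t}\,\frac{\bigl(\sum_j f_j^2 m_j\bigr)^2}{e^{2t}\sum_j f_j^2 m_j} \ =\ \sum_j f_j^2\, i(G_j,H(q))
\]
for every $t$, which is in fact stronger than the stated inequality for the $\liminf$.

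The step I expect to require the most care is the per-subsurface identity $\EL_{Y_j}(H_j,\trayt{q}{t})=e^{2t}m_j$, since it relies on Hubbard--Masur theory and the area interpretation of intersection on a \emph{bordered} surface, with the correct boundary behaviour along the critical leaves of $q_j$. A related subtlety is the ergodic case, in which several components $G_j$ share a single minimal subsurface: there the vanishing of the cross terms $i(G_j,H_\ell)$ must come from mutual singularity of the ergodic transverse measures rather than from disjointness of supports, and the subadditivity estimate must be adapted accordingly. In the Jenkins--Strebel situation relevant to this paper all components are cylinders, the subsurfaces $Y_j$ are genuine annuli, and both points reduce to elementary modulus computations.
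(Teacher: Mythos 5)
The paper does not prove this lemma at all: it is imported verbatim as \cite[Lemma 4]{walsh2019asympgeom}, so there is no internal proof to compare against. On its own merits, your argument is correct and essentially complete in the case where every $G_j$ is a cylinder component or the unique ergodic measure on its minimal component. The chain (Minsky's inequality with $\mu=\sum_j f_j H_j$, cross terms vanishing by disjointness of the $Y_j$, subadditivity of extremal length over disjoint subsurfaces via the Engel form of Cauchy--Schwarz, and the elementary modulus computation $\EL_{Y_j}(H_j,\trayt{q}{t})=e^{2t}m_j$ on annuli) is sound, and it even yields the inequality for every $t$ rather than only in the $\liminf$. Since the only place the paper uses this lemma is through \Cref{le:extremallengthlimit} applied to Jenkins--Strebel differentials in \Cref{pr:simplexoutsideclosure}, your proof covers everything the paper actually needs.

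However, the lemma as stated allows non-uniquely ergodic minimal components, and there your argument has a genuine gap that your proposed fix does not repair. If $G_j$ and $G_\ell$ are two ergodic components supported on the \emph{same} minimal subsurface $Y$, then your definitions give $q_j=q_\ell=q|_Y$ and hence $H_j=H_\ell=H(q|_Y)$, so $i(G_j,H_\ell)=i(G_j,H_j)=i(G_j,H(q))>0$: the cross terms do not vanish. Mutual singularity of the transverse measures $G_j$ and $G_\ell$ is irrelevant here, because $H_\ell$ is a horizontal foliation transverse to both, not another vertical measure. With the cross terms present, $i(F,\mu)$ acquires extra positive contributions but the denominator bound degrades in the same way, and the Cauchy--Schwarz step is no longer sharp, so you only recover the weaker bound $\bigl(\sum_j f_j m_j'\bigr)^2/\sum_j m_j'$ on each such $Y$ (with $m_j'=i(G_j,H(q))$), which is strictly less than $\sum_j f_j^2 m_j'$ unless the $f_j$ coincide. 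Repairing this requires a test object that separates the ergodic components --- Walsh does it by closing up long vertical segments through $G_j$-generic points (via the ergodic theorem) to produce curves converging projectively to each $G_j$ individually --- and is the real content of the general case. A secondary, fixable issue is that the identity $\EL_{Y_j}(H_j,\trayt{q}{t})=e^{2t}m_j$ on a bordered minimal subsurface is asserted rather than proved; only the inequality $\le$ is needed, but outside the annulus case it rests on an extremality statement for the flat metric on a surface with boundary that you should either prove or cite.
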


\begin{lem}[{\cite[Lemma 3]{walsh2019asympgeom}}]\label{th:walshlimit}
Let \(q\) be a unit area quadratic differential with vertical indecomposable components \(G_1,\ldots,G_k\). Then for any measured foliation $F$ and any $t\geq 0$, we have
\[e^{-2t}\EL(F,\trayt{q}{t})\ge\sum_j\frac{i(G_j,F)^2}{i(G_j,H(q))}.\]
\end{lem}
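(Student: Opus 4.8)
The plan is to prove the inequality directly, by producing a lower bound for $\EL(F, \trayt{q}{t})$ from a single well-chosen conformal metric and then optimizing over a free parameter. The basic tool is the classical metric definition of extremal length (see \cite{Kerckhoff, gardmasur1991extrgeom}): for any Borel conformal metric $\rho$ on a Riemann surface $R$ one has
\[
\EL(F, R) \ge \frac{L_\rho(F)^2}{\area(\rho)},
\]
where $\area(\rho) = \int_R \rho^2$ and $L_\rho(F)$ is the infimum of the $\rho$-lengths over all measured foliations in the class of $F$. Only this ``easy direction'' is needed, so I never have to exhibit an extremal metric.

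Write $Y_1, \ldots, Y_k$ for the disjoint subsurfaces supporting the indecomposable components $G_1, \ldots, G_k$ of $V(q)$, and let $q_t$ be the quadratic differential on $\trayt{q}{t}$ obtained from $q$ by the Teichm\"uller deformation, so that in natural coordinates $q_t = dw^2$ with $w = e^t x + i e^{-t} y$. Since the deformation $\begin{pmatrix}e^t&0\\0&e^{-t}\end{pmatrix}$ preserves Euclidean area, the flat metric $|q_t|$ is area-preserving along the ray, and thus $\area_{|q_t|}(Y_j) = \area_{|q|}(Y_j) = i(G_j, H(q))$. Moreover the vertical foliation of $q_t$ coincides leafwise with $V(q)$ but has its transverse measure scaled by $e^t$, so that $V(q_t)$ restricted to $Y_j$ equals $e^t G_j$. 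For any vector of positive weights $c = (c_1, \ldots, c_k)$, I would take $\rho$ equal to $c_j |q_t|^{1/2}$ on $Y_j$ and $0$ elsewhere, which gives $\area(\rho) = \sum_j c_j^2\, i(G_j, H(q))$. For the length, the key flat-geometry fact is that the $|q_t|^{1/2}$-length of any realization of $F$ is at least its horizontal variation, namely its intersection number with $V(q_t)$; because $i(G_j, F)$ is a homotopy invariant, these crossings are forced to occur inside $Y_j$, yielding $L_\rho(F) \ge e^t \sum_j c_j\, i(G_j, F)$.

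Combining the two estimates gives, for every weight vector $c$,
\[
e^{-2t}\EL(F, \trayt{q}{t}) \ge \frac{\bigl(\sum_j c_j\, i(G_j, F)\bigr)^2}{\sum_j c_j^2\, i(G_j, H(q))},
\]
and optimizing the right-hand side over $c$ by the Cauchy--Schwarz inequality (the supremum $\sum_j i(G_j,F)^2/i(G_j,H(q))$ being attained, up to scale, at $c_j = i(G_j,F)/i(G_j,H(q))$) produces exactly the claimed bound. I expect the main obstacle to be the rigorous justification of the length estimate $L_\rho(F) \ge e^t \sum_j c_j\, i(G_j, F)$ for a measured foliation rather than a simple closed curve: one must pass from the curve case to measured foliations, either by approximating $F$ with weighted multicurves and using continuity of both sides, or by arguing directly with the transverse measure, and one must verify carefully that the forced intersections with each $G_j$ take place within $Y_j$, where $\rho$ is positive, so that no length is saved by routing $F$ through the region where $\rho$ vanishes. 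It is precisely the homotopy-invariance of the intersection numbers $i(G_j, F)$ that makes this argument go through and that converts the per-component crossing data into the additive right-hand side.
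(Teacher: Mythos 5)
The paper itself offers no proof of this lemma---it is imported verbatim from Walsh \cite[Lemma 3]{walsh2019asympgeom}---so your proposal must be measured against Walsh's argument rather than anything in this text. Your overall scheme is the natural one and much of it is sound: the easy direction of the Beurling--Ahlfors definition of extremal length, the length-versus-horizontal-variation estimate, and the final optimization over the weights $c_j$, which is exactly \Cref{le:tituslemma} run in reverse (the paper uses it in the opposite direction for the upper bound in \Cref{le:extremallengthlimit}). The bookkeeping is also right where it applies: $V(q_t)=e^tV(q)$, area preservation along the ray, $\area_{|q|}(Y_j)=i(G_j,H(q))$ for a cylinder or a uniquely ergodic minimal piece, and the extremal choice $c_j=i(G_j,F)/i(G_j,H(q))$. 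The technicality you flag (passing from simple closed curves to measured foliations in the length estimate) is real but routine, handled by multicurve approximation and continuity of both sides in $F$.

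The genuine gap is your very first structural assumption: the indecomposable components $G_1,\ldots,G_k$ are \emph{not} in general supported on disjoint subsurfaces $Y_1,\ldots,Y_k$. As the paper recalls right after \Cref{pr:closurecharacterization}, an indecomposable component is either a cylinder or one of the ergodic parts of the transverse measure on a minimal component, and a non-uniquely-ergodic minimal component carries several mutually singular ergodic measures whose topological supports all coincide with the whole component (every leaf is dense in it). For such $q$ your metric $\rho=c_j|q_t|^{1/2}$ on $Y_j$ is not even defined---there is no way to assign different constants to components sharing a support---the area identity fails (the $|q|$-area of a shared minimal piece $Y$ is $\sum_i i(G_{j_i},H(q))$ summed over \emph{all} ergodic parts living on $Y$), and the crossing estimate cannot separate the components geographically. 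The repair, which is where the real content of the lemma lies, is to replace subsurfaces by a leaf-saturated Borel partition: mutual singularity of the ergodic parts yields disjoint Borel sets $E_j$, each a union of vertical leaves, carrying the product measure of $G_j$ with the horizontal measure and null for the other components. Setting $\rho=c_j|q_t|^{1/2}$ on $E_j$ is legitimate since extremal length is a supremum over Borel conformal metrics; the area computation is unchanged, and along any transversal the horizontal variation accumulated in $E_j$ is at least the $e^tG_j$-transverse measure, which restores $L_\rho(F)\ge e^t\sum_j c_j\,i(G_j,F)$. With that replacement your argument goes through; as written, it proves the lemma only when every minimal component of $V(q)$ is uniquely ergodic.
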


\begin{lem}[Titu's lemma]\label{le:tituslemma}
Let $n\ge 1$, and let \(a,b \in [0,\infty)^n \setminus\{0\}\) be such that there is no coordinate \(j\) for which \(a_j\) and \(b_j\) are both zero. Then the function from \([0,\infty)^n \setminus\{0\}\) to \(\RR\) defined by
\[x\to \frac{\left(\sum_{j=1}^n a_j x_j\right)^2}{\sum_{n=1}^n b_j x_j^2}\]
attains its maximum of \(\sum_{j=1}^n a_j^2/b_j\) only when there is a constant $C>0$ such that \(x_j=Ca_j/b_j\) for all $j$.
\end{lem}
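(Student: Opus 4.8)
The plan is to recognize the stated inequality as the Engel (``Titu'') form of the Cauchy--Schwarz inequality and to prove it by a single application of Cauchy--Schwarz, reading off the ``only when'' clause from the equality case of Cauchy--Schwarz. First I would dispose of the degenerate situation in which some $b_j=0$: the hypothesis that no coordinate has $a_j$ and $b_j$ both zero then forces $a_j>0$ for that index, the denominator $\sum_k b_k x_k^2$ can be driven to $0$ while the numerator stays bounded away from $0$ (let $x$ concentrate on $\{k:b_k=0\}$), and both the supremum of the function and the quantity $\sum_k a_k^2/b_k$ equal $+\infty$. Hence the only substantive case is $b_j>0$ for every $j$, which I assume from now on.

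For the bound itself I would rewrite the numerator as a dot product adapted to the denominator: set $u_j=a_j/\sqrt{b_j}$ and $v_j=\sqrt{b_j}\,x_j$, so that $\sum_j a_j x_j=\sum_j u_j v_j$, $\sum_j b_j x_j^2=\sum_j v_j^2$, and $\sum_j a_j^2/b_j=\sum_j u_j^2$. Cauchy--Schwarz then gives
\[
\Big(\sum_j a_j x_j\Big)^2=\Big(\sum_j u_j v_j\Big)^2\le\Big(\sum_j u_j^2\Big)\Big(\sum_j v_j^2\Big)=\Big(\sum_j \frac{a_j^2}{b_j}\Big)\Big(\sum_j b_j x_j^2\Big).
\]
Dividing by $\sum_j b_j x_j^2$, which is strictly positive because $x\neq 0$ and all $b_j>0$, yields the claimed upper bound $\sum_j a_j^2/b_j$. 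To see that this value is attained, I would evaluate the function at $x_j=a_j/b_j$, a nonzero point of the domain since $a\neq 0$: there the numerator and denominator equal $\big(\sum_j a_j^2/b_j\big)^2$ and $\sum_j a_j^2/b_j$ respectively, so the ratio is exactly $\sum_j a_j^2/b_j$.

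For the ``only when'' clause I would invoke the equality case of Cauchy--Schwarz. Since $u\neq 0$ (because $a\neq 0$), equality forces $v=\lambda u$ for some scalar $\lambda$, that is, $x_j=\lambda\, a_j/b_j$ for all $j$. The entries of $u$ and $v$ are nonnegative and $v\neq 0$ on the domain, so $\lambda>0$; writing $C=\lambda$ gives precisely $x_j=C a_j/b_j$ with $C>0$. This is where the hypothesis that no coordinate has $a_j=b_j=0$ earns its keep: it guarantees that the relation $x_j=C a_j/b_j$ determines every coordinate (those with $a_j=0$ are forced to $x_j=0$), so that the maximizer is unique up to the positive scaling $C$ with no coordinate left undetermined.

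There is no genuine obstacle here, since the statement is an elementary reformulation of Cauchy--Schwarz; the only points requiring care are bookkeeping. I would check that the denominator does not vanish on the domain (guaranteed by $b_j>0$ together with $x\neq 0$), that the extremal point $x_j=a_j/b_j$ indeed lies in $[0,\infty)^n\setminus\{0\}$, and that the equality analysis yields $C>0$ rather than merely $C\ge 0$. Accordingly I would keep the write-up short, leading with the substitution $u_j=a_j/\sqrt{b_j}$, $v_j=\sqrt{b_j}\,x_j$ and letting Cauchy--Schwarz and its equality case do all the work.
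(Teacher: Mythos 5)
Your proof is correct: the substitution $u_j=a_j/\sqrt{b_j}$, $v_j=\sqrt{b_j}\,x_j$ reduces the bound, its attainment at $x_j=a_j/b_j$, and the ``only when'' clause to the Cauchy--Schwarz inequality and its equality case, and your bookkeeping (positivity of the denominator, $\lambda>0$ rather than $\lambda\ge 0$, and the role of excluding coordinates with $a_j=b_j=0$, which would leave $x_j$ unconstrained and falsify uniqueness) is sound. The paper states Titu's lemma without proof, as a classical fact, and your Cauchy--Schwarz argument is exactly the standard one it implicitly invokes --- your extra care with the degenerate case $b_j=0$ (where both the supremum and $\sum_j a_j^2/b_j$ are infinite, and which never occurs in the paper's application since there $b_j=1/i(G_j,H(q))>0$) is a reasonable reading of the statement as written.
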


We combine these results to obtain the rescaled limit of the extremal length of foliations that get pinched along a Teichm\"uller ray. This should be compared with \cite[Theorem 1]{walsh2019asympgeom}, which gives the rescaled limit of the extremal length for foliations that get stretched along the Teichm\"uller ray.

\begin{lem}\label{le:extremallengthlimit}
Let \(q\) be a unit area quadratic differential with vertical indecomposable components \(G_1,\ldots,G_k\) and let \(F=\sum f_j G_j\) for some \(f_j\ge 0\). Then
\[\lim_{t\to \infty} e^{2t}\EL(F,\trayt{q}{t})=\sum_j f_j^2 i(G_j,H(q)).\]
\end{lem}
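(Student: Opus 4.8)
The plan is to prove the matching lower and upper bounds for $e^{2t}\EL(F,\trayt{q}{t})$ separately. The lower bound
\[
\liminf_{t\to\infty} e^{2t}\EL(F,\trayt{q}{t}) \ge \sum_j f_j^2\, i(G_j,H(q))
\]
is exactly \Cref{le:walshlowerbound}, so nothing new is needed there. The entire content of the lemma is therefore the reverse inequality, and I would in fact establish the stronger \emph{pointwise} statement that $e^{2t}\EL(F,\trayt{q}{t}) \le \sum_j f_j^2\, i(G_j,H(q))$ for \emph{every} $t \ge 0$; combined with the $\liminf$ bound, this forces the limit to exist and equal the desired value.

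For the upper bound, the idea is to exploit the sharp case of Minsky's fundamental inequality $i(A,B)^2 \le \EL(A,Y)\EL(B,Y)$. First I would let $\psi \in Q(\trayt{q}{t})$ be the quadratic differential with $V(\psi)=F$ furnished by the Hubbard--Masur theorem, so that $\EL(F,\trayt{q}{t})=\|\psi\|=i(F,H(\psi))$ using the standard identity $\|\psi\|=i(V(\psi),H(\psi))$. Writing $G^\ast := H(\psi)$ and noting that $-\psi$ realizes $G^\ast$ as its vertical foliation, we also have $\EL(G^\ast,\trayt{q}{t})=\|\psi\|=\EL(F,\trayt{q}{t})$. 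These two identities combine into the sharp relation
\[
\EL(F,\trayt{q}{t}) = \frac{i(F,G^\ast)^2}{\EL(G^\ast,\trayt{q}{t})},
\]
which reduces the problem to bounding a single ratio.

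Next I would bound the denominator from below by applying \Cref{th:walshlimit} to the foliation $G^\ast$, giving $\EL(G^\ast,\trayt{q}{t}) \ge e^{2t}\sum_j i(G_j,G^\ast)^2/i(G_j,H(q))$, and expand the numerator using additivity and homogeneity of the intersection number as $i(F,G^\ast)=\sum_j f_j\, i(G_j,G^\ast)$. Setting $x_j := i(G_j,G^\ast)$, this yields
\[
e^{2t}\EL(F,\trayt{q}{t}) \le \frac{\left(\sum_j f_j x_j\right)^2}{\sum_j x_j^2/i(G_j,H(q))}.
\]
Applying Titu's lemma (\Cref{le:tituslemma}) with $a_j=f_j$ and $b_j=1/i(G_j,H(q))$ bounds the right-hand side by $\sum_j f_j^2\, i(G_j,H(q))$, which completes the argument. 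The hypotheses of \Cref{le:tituslemma} hold because $i(G_j,H(q))>0$ for each indecomposable component $G_j$ of $V(q)$ (so every $b_j$ is positive), and because the vector $(x_j)$ is nonzero: otherwise $i(F,G^\ast)=0$ would force $\EL(F,\trayt{q}{t})=0$, which is impossible for $F\neq 0$ (the case $F=0$ being trivial).

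The main conceptual point, and where I expect the real work to lie, is recognizing that the \emph{sharpness} of Minsky's inequality — i.e. the existence of the dual foliation $G^\ast=H(\psi)$ turning $i(F,G^\ast)^2\le \EL(F,\cdot)\EL(G^\ast,\cdot)$ into an equality — is precisely what converts the otherwise one-sided tools \Cref{th:walshlimit} and \Cref{le:tituslemma} into a two-sided estimate. Both of those tools only ever produce \emph{lower} bounds for extremal lengths, so the trick is to route the desired upper bound on $\EL(F,\cdot)$ through a lower bound on $\EL(G^\ast,\cdot)$. The Cauchy--Schwarz-type optimization in Titu's lemma then accounts exactly for the fact that the components $G_j$ are mutually non-crossing, producing the sum-of-squares $\sum_j f_j^2\,i(G_j,H(q))$ rather than a larger expression with cross terms; a naive estimate via subadditivity of $\sqrt{\EL(\cdot,\cdot)}$ would instead overcount and yield only $\bigl(\sum_j f_j\sqrt{i(G_j,H(q))}\bigr)^2$.
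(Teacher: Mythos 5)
Your proposal is correct and follows essentially the same route as the paper's proof: the lower bound is quoted from \Cref{le:walshlowerbound}, and the upper bound is obtained by combining the equality case of Minsky's inequality for the dual horizontal foliation $G^\ast = H(\psi) = F_t$ with the lower bound of \Cref{th:walshlimit} and \Cref{le:tituslemma}, with exactly the same substitutions $a_j = f_j$, $b_j = 1/i(G_j,H(q))$, $x_j = i(G_j,\cdot)$. The only cosmetic difference is that you keep the specific dual foliation throughout and obtain a pointwise-in-$t$ upper bound, whereas the paper passes to a supremum over all $G \in \MF$ before taking the $\limsup$; both yield the same estimate.
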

\begin{proof}
By \Cref{le:walshlowerbound} we only need to prove the upper bound. 
For each $t \in \RR$, let $F_t$ be the horizontal foliation of the quadratic differential on $\trayt{q}{t}$ with vertical foliation $F$. Then
\[\EL(F,\trayt{q}{t})\EL(F_t,\trayt{q}{t})=i(F,F_t)^2\]
by the case of equality in Minsky's inequality (see \cite{gardmasur1991extrgeom}).
Thus,
\begin{align*}
\limsup_{t\to \infty}e^{2t}\EL(F,\trayt{q}{t}) &= \limsup_{t\to \infty}\frac{i(F,F_t)^2}{e^{-2t}\EL(F_t,\trayt{q}{t})} \\
&\le \limsup_{t\to \infty}\sup_{G\in \MF}\frac{i(F,G)^2}{e^{-2t}\EL(G,\trayt{q}{t})} \\
&\le\sup_{G\in \MF}\frac{i(F,G)^2}{\sum_j\frac{i(G_j,G)^2}{i(G_j,H(q))}},
\end{align*}
where the last inequality is from \Cref{th:walshlimit}. By applying \Cref{le:tituslemma} with $a_j = f_j$, $b_j=1/i(G_j,H(q))$, and $x_j = i(G_j,G)$, we obtain
\[
\frac{ i(F,G)^2}{ \sum_j \frac{i(G_j,G)^2}{i(G_j,H(q))}} = \frac{\left(\sum_j f_j i(G_j,G)\right)^2}{ \sum_j \frac{i(G_j,G)^2}{i(G_j,H(q))}} \leq \sum_j f_j^2 i(G_j,H(q))
\]
for every $G$, which yields the desired inequality.
\end{proof}

We can now prove our result concerning the accumulation points of the simplex $\Delta_{\trayt{q}{s}}$ as $s\to \infty$.

\begin{proof}[Proof of \Cref{pr:simplexoutsideclosure}]
Recall that \(V(q)=c \sum_j \lambda_j \gamma_j\) for some $c>0$, which has indecom\-po\-sable components $c\lambda_j \gamma_j$. Thus, for any measured foliation \(F\) and any weight vector $w$, we have
\begin{align*}
\lim_{s\to \infty} e^{2s}\xi_{w,\trayt{q}{s}}(F)^2&=\lim_{s\to \infty} e^{2s}\EL\left(\sum_j w_j i(\gamma_j,F)\gamma_j, \trayt{q}{s}\right)\\
&=\sum_j\left(\frac{w_j i(\gamma_j,F)}{c\lambda_j}\right)^2 i(c\lambda_j\gamma_j,H(q)) \\
&=\sum_j \frac{w_j^2 i(\gamma_j,F)^2}{c\lambda_j} i(\gamma_j,H(q))
\end{align*}
by \Cref{le:extremallengthlimit}.

On the other hand, for any quadratic differential \(\phi\) of unit area on $X$ with \(V(\phi)=\sum_j d_j \gamma_j\), we have
\[\calE_\phi(F)^2=\sum_j \frac{i(d_j\gamma_j,F)^2}{i(d_j\gamma_j,H(\phi))}=\sum_j \frac{d_j i(\gamma_j,F)^2}{i(\gamma_j,H(\phi))},\]
so setting \[w_j=\left(\frac{ c\lambda_j d_j}{i(\gamma_j,H(q))i(\gamma_j,H(\phi))}\right)^{1/2}\]
yields that
\[
\lim_{s\to \infty} e^{s}\xi_{w,\trayt{q}{s}}(F) = \calE_\phi(F)
\]
as required. This means that $[\xi_{w,\trayt{q}{s}}] \to [\calE_\phi]=B(\phi)$ as $s\to \infty$. In particular, any open neighborhood of $B(\phi)$ contains points of the form $[\xi_{w,\trayt{q}{s}}]$, which are not in $\overline{\calB}$ according to \Cref{co:outsideclosure}. In other words, $B(\phi)$ does not belong to $\overline{\calB}^\circ$, where the interior is taken relative  to the horoboundary (as opposed to the whole compactification).
\end{proof}

Let $Q_X^\gamma \subset Q^1(X)$ be the set of quadratic differentials of unit area on $X$ whose vertical foliation is a positive linear combination of the $\gamma_i$. Now that we have found one simplex $B(Q_X^\gamma)$ disjoint from $\overline{\calB}^\circ$, we can obtain many others using the action of the mapping class group, and then we can take the closure of this orbit. In the next section, we show that this orbit closure contains $\calB$, hence $\overline{\calB}$, which implies that 
 $\overline{\calB}^\circ$ is empty. In other words, Busemann points are nowhere dense within the boundary.

\section{Orbits of top-dimensional simplices}  \label{sec:orbits}

The final step in the proof is to show that the mapping class group orbit of the simplex $B(Q_X^\gamma)$ is dense among Busemann points. For this it, is useful to recall a result of Walsh which characterizes when a sequence of Busemann points converges to a Busemann point. In order to state the result, we first introduce some terminology.

\begin{defn} \label{def:strong_conv}
Let $S$ be a surface of finite type and let $F, F_n \in \MF(S)$. We say that $F_n$ \emph{converges strongly} to $F$ as $n \to \infty$ if the following conditions hold:
\begin{enumerate}
\item $F_n$ converges to $F$ in the usual (weak) sense as $n \to \infty$;
\item  For any sequence $F_n^{j_n}$ of indecomposable components  of $F_n$, any weak limit of a subsequence of this sequence has to be indecomposable. 
\end{enumerate}
\end{defn}
Recall that the zero foliation is considered to be indecomposable, so some indecomposable components of $F_n$ are allowed to disappear in the limit. Informally, the second condition means that no indecomposable component of $F_n$ can ``split'' in the limit. However, distinct indecomposable components are allowed to merge in the limit.

The convergence of the Busemann points associated to geodesic rays from a common basepoint can be characterized as follows.

\begin{thm}[{\cite[Theorem 10]{walsh2019asympgeom}}] \label{walsh_Busemann}
Let $S$ be a surface of finite type, let $X \in \Teich(S)$, and let $q, q_n \in Q^1(X)$. Then $B(q_n) \to B(q)$ as $n \to \infty$  if and only if $V(q_n)$ converges strongly to $V(q)$ as $n \to \infty$.
\end{thm}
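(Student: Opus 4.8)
The plan is to reduce everything to Walsh's explicit formula, which represents each Busemann point $B(q)$ in the Gardiner--Masur compactification by the function
\[
\calE_q(F)=\left(\sum_l \frac{i(G_l,F)^2}{i(G_l,H(q))}\right)^{1/2},
\]
where $G_1,\dots,G_k$ are the indecomposable components of $V(q)$. Fixing the basepoint $X$ removes the projective ambiguity, so that the statement ``$B(q_n)\to B(q)$'' becomes the genuine (non-projective) convergence $\calE_{q_n}\to\calE_q$ in the compact-open topology. I would exploit two structural facts: first, homogeneity, namely that $G\mapsto i(G,F)^2/i(G,H(q))$ is homogeneous of degree $1$ in $G$ and that $i(G_l,H(q))>0$ since vertical and horizontal foliations are transverse on each component; second, the Cauchy--Schwarz (Engel form) inequality $\bigl(\sum_l c_l p_l\bigr)^2/\sum_l c_l u_l\le \sum_l c_l\,p_l^2/u_l$ for $c_l,u_l>0$ (the sum-of-fractions cousin of \Cref{le:tituslemma}), whose equality case requires all ratios $p_l/u_l$ to coincide. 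I would also use that the indecomposable components $G_l$ of the fixed foliation $V(q)$ have mutually singular transverse measures, so any $F\le V(q)$ decomposes uniquely as $\sum_l c_l G_l$ with coefficients read off independently.

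The technical heart is a single master estimate valid whenever $V(q_n)\to V(q)$ weakly. The number of indecomposable components is bounded by the complexity of $S$, so along a subsequence I may assume a fixed number of components and that the $i$-th component $G_i^n$ of $V(q_n)$ has a weak limit $\Gamma_i$ (possibly $0$, possibly decomposable), with $\sum_i\Gamma_i=V(q)$. Writing $\Gamma_i=\sum_l c_{i,l}G_l$ (so $\sum_i c_{i,l}=1$ for each $l$ by mutual singularity), the Cauchy--Schwarz inequality gives
\[
\frac{i(\Gamma_i,F)^2}{i(\Gamma_i,H(q))}=\frac{\left(\sum_l c_{i,l}\,i(G_l,F)\right)^2}{\sum_l c_{i,l}\,i(G_l,H(q))}\le \sum_l c_{i,l}\frac{i(G_l,F)^2}{i(G_l,H(q))},
\]
and summing over $i$ (with vanishing components contributing $0$ by the degree-$1$ homogeneity) yields $\limsup_n \calE_{q_n}(F)^2\le \calE_q(F)^2$. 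The equality case shows that this is an equality for a given $F$ precisely when each $\Gamma_i$ is proportional to a single indecomposable $G_l$, i.e.\ when no component splits along that subsequence.

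Both implications then fall out of this estimate. For the forward direction, strong convergence means every weak subsequential limit of a component is indecomposable, so there is no splitting and the inequality above is an equality for every $F$; since every subsequence produces the same limit $\calE_q(F)^2$, the full limit exists and $\calE_{q_n}\to \calE_q$ (upgrading pointwise to compact-open convergence using the uniform control of Gardiner--Masur representatives). For the reverse direction, I would first obtain condition~(1) from continuity of Azemar's map $\Pi_X$: since $\Pi_X\circ B=\id$ on $Q^1(X)$, convergence $B(q_n)\to B(q)$ forces $q_n=\Pi_X(B(q_n))\to q$, whence $V(q_n)\to V(q)$ weakly by the Hubbard--Masur homeomorphism. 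Then $\calE_{q_n}(F)^2\to\calE_q(F)^2$ for all $F$, so equality holds everywhere; if some component split to a limit $aG_1+bG_2+\cdots$ with $G_1,G_2$ distinct, I would build (via \cite[Proposition 6.7]{FLP}, exactly as in \Cref{co:outsideclosure}) a test foliation $F$ with $i(G_1,F)/i(G_1,H(q))\ne i(G_2,F)/i(G_2,H(q))$, making the inequality strict at that $F$ and contradicting equality. Hence no splitting occurs and condition~(2) holds.

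I expect the reverse direction to be the main obstacle, and within it the delicate points are the normalization and the detection of a split. Pinning the scale (via $\Pi_X$) so that $\calE_{q_n}\to\calE_q$ is genuine rather than merely projective is what makes the strict deficit from a split non-negotiable, ruling out the possibility that other components over- or under-shoot to compensate --- a worry that the termwise $\le$ structure of the master estimate in fact eliminates automatically. The remaining care is routine bookkeeping: handling the $0/0$ convention for components shrinking to the zero foliation, justifying $\sum_i\Gamma_i=V(q)$ and the additivity $\sum_i c_{i,l}=1$ from mutual singularity, and choosing the detecting foliation $F$ so that the strict Cauchy--Schwarz inequality survives summation over the other components.
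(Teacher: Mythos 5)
First, note that \Cref{walsh_Busemann} is quoted from \cite[Theorem 10]{walsh2019asympgeom} and is not proved in this paper, so there is no internal proof to compare against; what follows is an assessment of your argument on its own terms. Your mechanism --- representing $B(q)$ by Walsh's function $\calE_q$, passing to a subsequence along which the (boundedly many) indecomposable components of $V(q_n)$ converge, and running the Engel-form Cauchy--Schwarz inequality with its equality case to detect splitting --- is the right one, and it is essentially how Walsh argues; the supporting estimates are the ones quoted in this paper as \Cref{le:walshlowerbound}, \Cref{th:walshlimit} and \Cref{le:tituslemma}. The algebra of the master estimate, including the normalization $\sum_i c_{i,l}=1$ and the equality analysis, is correct, and using $\Pi_X$ to extract condition (1) of \Cref{def:strong_conv} from $B(q_n)\to B(q)$ is legitimate given the tools quoted in \Cref{subsec:horo}.

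Two points you dismiss as bookkeeping are genuine gaps. First, the claim that a component $G_i^n\to 0$ contributes $0$ in the limit does not follow from degree-$1$ homogeneity alone: writing the term as $i(G_i^n,F)\cdot i(G_i^n,F)/i(G_i^n,H(q_n))$, the first factor vanishes but the second is scale-invariant and could a priori blow up if the projective classes $[G_i^n]$ accumulated on a class $[\hat\Gamma]$ with $i(\hat\Gamma,H(q))=0$. This must be ruled out, e.g.\ as follows: since $G_i^n$ is a component of $V(q_n)$ one has $i(G_i^n,V(q_n))=0$, hence $i(\hat\Gamma,V(q))=0$ in the limit, and since $V(q)$ and $H(q)$ jointly fill (the $q$-length gives $i(V(q),F)+i(H(q),F)>0$ for all $F\neq 0$) this forces $i(\hat\Gamma,H(q))>0$, so the ratio stays bounded and the term does tend to $0$. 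Second, the identification of each nonzero weak limit $\Gamma_i$ as a sub-foliation $\sum_l c_{i,l}G_l$ of $V(q)$ with $\sum_i c_{i,l}=1$ is not a consequence of mutual singularity of the $G_l$ alone; it requires knowing that weak limits of the jointly realizable pieces of $V(q_n)$ are jointly realizable pieces of $V(q)$ (one can get this from $i(\Gamma_i,\Gamma_j)=0$ together with additivity of intersection numbers under such sums, but it is a real step in the structure theory of measured foliations, and it is also what guarantees $i(\Gamma_i,H(q))>0$ for $\Gamma_i\neq 0$ so that the surviving terms converge). A minor further point: \cite[Proposition 6.7]{FLP} produces foliations with prescribed intersections with a pants decomposition, but the $G_l$ here are arbitrary indecomposable pieces (possibly distinct ergodic measures on a single minimal component); to separate $G_1$ from $G_2$ you should instead invoke the injectivity of $G\mapsto i(G,\cdot)$ on $\MF(S)$, which yields a simple closed curve $F$ with $i(G_1,F)\,i(G_2,H(q))\neq i(G_2,F)\,i(G_1,H(q))$. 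With these repairs the argument goes through.
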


Note that the topology on $\calB$ can be pulled back to $Q^1(X)$ via the Busemann map $B$, and then to $\PMF(S)$ via the Hubbard--Masur map. In turn, this defines a topology on $\MF(X)$, which is an infinite cone over $\PMF(S)$.  It follows from the above theorem that the notion of strong convergence in $\MF(S)$ is compatible with this topology, which we thus call the \emph{strong topology}.

While weighted simple closed curves are dense in the weak topology on $\MF(S)$, they can only converge strongly to indecomposable foliations. In particular, weighted simple closed curves are not dense in the strong topology unless $S$ is a sphere with $4$-times punctured sphere or a once-punctured torus. However, we now show that weighted multicurves are still dense. In fact, the mapping class group orbit of any single cone of weighted maximal multicurves is dense.

\begin{prp}\label{pr:denseorbitsimplex}
Let $\alpha=\sum_j \alpha_j$ be a pants decomposition of a surface $S$ of finite type. Let $C(\alpha) \subset \MF(S)$ be the cone of positive linear combinations of the curves $\alpha_j$. Then the set $\bigcup_{f \in \MCG(S)} f(C(\alpha))$ is dense in  $\MF(S)$ with respect to the strong topology, where $\MCG(S)$ is the mapping class group of $S$.
\end{prp}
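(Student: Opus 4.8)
The plan is to first reduce the statement to a purely topological approximation problem and then to solve it by approximating the ergodic pieces of a foliation one at a time. Since the mapping class group acts transitively on pants decompositions (the change of coordinates principle), the set $\bigcup_{f \in \MCG(S)} f(C(\alpha))$ is precisely the set of all weighted pants decompositions of $S$ with strictly positive weights. Thus it suffices to show that weighted pants decompositions are dense in $\MF(S)$ for the strong topology. By \Cref{def:strong_conv} this means I must produce, for a given $F \in \MF(S)$, a sequence of weighted pants decompositions $P_n$ with $P_n \to F$ weakly and with the property that no weak subsequential limit of a sequence of indecomposable components of the $P_n$ is decomposable. I record at the outset the easy fact that a single indecomposable foliation is already a strong limit of a single weighted simple closed curve: weighted curves are weakly dense, and the limit being indecomposable makes condition (2) vacuous in that case.

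Next I would invoke the canonical decomposition of $F$ into its finitely many indecomposable components: the cylinder components $c_1\delta_1, \dots, c_b\delta_b$ (weighted core curves) and the ergodic components $\mu_1, \dots, \mu_m$, that is, the ergodic transverse measures carried by the minimal subsurfaces of $F$. The heart of the construction is to approximate each ergodic component $\mu_i$ by a single weighted simple closed curve $w_i^n c_i^n$ in such a way that all the curves $c_i^n$, together with the cylinder cores $\delta_j$, are pairwise disjoint. For ergodic components supported on distinct minimal subsurfaces, and for the cylinder cores, disjointness is automatic. I would then take the disjoint multicurve formed by the $c_i^n$ and the $\delta_j$ and complete it to a pants decomposition $P_n$, assigning the completing curves weights $\eta^n \to 0$ small enough (relative to the size of those curves in $\MF(S) \cong \RR^{6g+2p-6}$) that their total contribution vanishes weakly.

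The verification then splits into two checks. For weak convergence, I would test against an arbitrary simple closed curve $c$ and use that intersection number is additive over the components of a multicurve, so that
\[
i(c, P_n) = \sum_i w_i^n\, i(c, c_i^n) + \sum_j c_j\, i(c, \delta_j) + (\text{terms} \to 0).
\]
Since $w_i^n c_i^n \to \mu_i$ gives $w_i^n\, i(c,c_i^n) \to i(c,\mu_i)$, and since a single taut representative of $c$ simultaneously computes its intersection with every transverse measure carried by a fixed minimal foliation, one has $i(c,F) = \sum_i i(c,\mu_i) + \sum_j c_j\, i(c,\delta_j)$; hence $i(c,P_n) \to i(c,F)$ for every $c$, which is weak convergence. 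For the no-splitting condition, every indecomposable component of $P_n$ is a single weighted curve, hence indecomposable; given any sequence of such components, there are only finitely many ``types'' (approximants of a fixed $\mu_i$, a fixed cylinder, or a completing curve), so any weakly convergent subsequence has a further subsequence of constant type whose limit is one of the $\mu_i$, a cylinder $c_j\delta_j$, or $0$, all indecomposable. Therefore $P_n \to F$ strongly.

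The main obstacle is the disjoint approximation of several ergodic components $\mu_1, \dots, \mu_r$ living on a common minimal subsurface $Y$ (the non-uniquely ergodic case); this is exactly the point where approximation by a single curve fails, since a single curve converging to the whole minimal foliation $\mu_1 + \cdots + \mu_r$ would violate the no-splitting condition. To handle it I would build the curve approximating $\mu_i$ by closing up a long segment of a $\mu_i$-generic leaf of the common underlying foliation $\mathcal F_0$: by the Birkhoff ergodic theorem such a segment equidistributes with respect to $\mu_i$ along transversals, so after suitable rescaling the closed-up curve converges weakly to $\mu_i$. Disjointness is forced by the mutual singularity of the ergodic measures: for $i \ne j$ the $\mu_i$-generic and $\mu_j$-generic leaves form disjoint full-measure sets, so the two leaf segments can be taken on genuinely different leaves, and since leaves of a foliation never cross, the curves can be made disjoint by performing the closing operation inside disjoint flow boxes. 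Turning this picture into a rigorous argument, namely embeddedness of each closed-up curve, genuine disjointness of the family, and control of the error contributed by the short closing arcs to the limiting transverse measure, is the delicate part; everything else is bookkeeping.
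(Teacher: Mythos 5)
There is a genuine gap at the very first step. You claim that by the change of coordinates principle the mapping class group acts transitively on pants decompositions, so that $\bigcup_{f \in \MCG(S)} f(C(\alpha))$ is the set of \emph{all} positively weighted pants decompositions. This is false in general: the change of coordinates principle only gives transitivity on pants decompositions of a fixed \emph{topological type}, and most surfaces admit several types (already for the closed genus-$2$ surface there are pants decompositions consisting of three nonseparating curves and others containing a separating curve, and these are not related by any homeomorphism). Consequently the orbit of $C(\alpha)$ only sweeps out cones over pants decompositions homeomorphic to $\alpha$, and your construction produces approximating pants decompositions $P_n$ of uncontrolled topological type, so even granting $P_n \to F$ strongly you cannot conclude that $F$ lies in the closure of the orbit of $C(\alpha)$. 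This is precisely the issue the paper's proof is built to address: it first shows, via a Dehn twist limit, that $C(\beta)$ lies in the closure of the orbit of $C(\alpha)$ whenever $\beta$ is adjacent to $\alpha$ in the pants graph, and then uses the Hatcher--Thurston connectivity of the pants graph to reach every topological type. Some such bridging argument is indispensable and is entirely missing from your proposal.

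A secondary point: the remainder of your argument, approximating the ergodic components of a non-uniquely ergodic minimal piece by \emph{disjoint} weighted simple closed curves via closing up generic leaf segments, is exactly the content of the Lenzhen--Masur theorem that the paper simply cites (\cite[Theorem C]{LenzhenMasur}). Your sketch via Birkhoff equidistribution and mutual singularity of the ergodic measures is the standard heuristic, but as you acknowledge, embeddedness, disjointness, and control of the closing arcs are the delicate points, and they are left unproved. If you repair the transitivity issue (e.g.\ by adding the pants-graph step) and replace your sketch with the citation, the rest of your verification of weak convergence and the no-splitting condition is sound and matches the paper's strategy.
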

\begin{proof}
Let $\beta=\beta_1+\alpha_2+\cdots+\alpha_k$ be a pants decomposition adjacent to $\gamma$ in the pants graph (meaning that $i(\alpha_1,\beta_1)$ is equal to $1$ or $2$). Let $\tau$ be the Dehn twist around $\beta_1$. Then $\tau^n(\alpha_1)/n$ converges weakly to \(i(\alpha_1,\beta_1)\beta_1\) as $n\to \infty$, hence strongly since the limit is indecomposable. Furthermore, if \(j\ge 2\) then $\tau^n(\alpha_j)=\alpha_j$ for every $n$ since \(\beta_1\) and \(\alpha_j\) are disjoint. 

Let \(v=w_1 \beta_1+w_2\alpha_2+\ldots +w_k\alpha_k\) be any point in \(C(\beta)\). Then \[v_n = \tau^n\left(\frac{w_1}{n i(\alpha_1,\beta_1)} \alpha_1+\sum_{j=2}^k w_j \alpha_j\right)=\frac{w_1}{n i(\alpha_1,\beta_1)}\tau^n(\alpha_1)+\sum_{j=2}^k w_j \alpha_j,\] which converges strongly to \(v\) since with this labelling, the $j$-th indecomposable component of $v_n$ converges to the $j$-th indecomposable component of $v$. Thus, for any multicurve \(\beta\) adjacent to \(\alpha\) in the pants graph, we have 
\begin{equation} \label{eq:pants}
C(\beta) \subseteq \overline{\bigcup_{f \in \MCG(S)} f(C(\alpha))},
\end{equation}
where the closure is with respect to the strong topology.

It follows that 
\[\overline{\bigcup_{f \in \MCG(S)} f(C(\beta))} \subseteq \overline{\bigcup_{f \in \MCG(S)} f(C(\alpha))}.\]
because the right-hand side is mapping class group invariant and closed. Since the pants graph is connected \cite[Theorem 1.1]{HatcherThurston}, we can make chains of inclusions of this form and conclude that \Cref{eq:pants} hold for \emph{any} pants decomposition $\beta$ of $S$.

Now let $\gamma$ be a multicurve which is not maximal, and let $\tilde{\gamma}$ be a pants decomposition containing $\gamma$. By decreasing the weights of the extra curves to zero, we see that $C(\gamma)$ is contained in the closure of $C(\tilde{\gamma})$, hence also in $\overline{\bigcup_{f \in \MCG(S)} f(C(\alpha))}$.

Finally, let $\gamma \in \MF(S)$ be any measured foliation different from zero (which is obviously in the closure of any cone). Then $\gamma$ is a union of subfoliations $\gamma_j$ which are either weighted closed curves or minimal, all supported on disjoint subsurfaces $S_j$. In turn, each minimal subfoliation $\gamma_j$ is a finite sum of independent ergodic measures (which are the indecomposable components of $\gamma_j$). By \cite[Theorem C]{LenzhenMasur}, for any minimal foliation $F$ on a surface $R$ with ergodic components $\nu^1,\ldots, \nu^k$, there is a sequence of weighted multicurves $\delta_n = \sum_i \delta_n^i$ such that $\delta_n^i \to \nu^i$ as $n \to \infty$ for each $i$. In particular, $\delta_n$ converges strongly to $F$ as $n\to \infty$.  By applying this result to each minimal subfoliation $\gamma_j$ of $\gamma$ and adding the curve components of $\gamma$, we obtain a sequence of weighted multicurves $\epsilon_n$ on $S$ which converges strongly to $\gamma$ as $n\to \infty$.

We conclude that $\gamma$ is in the closure of the sequence $(\epsilon_n)_{n=1}^\infty$, which is itself in the closure of the orbit of $C(\alpha)$. This concludes the proof.
\end{proof}

\begin{rem}
We emphasize that the proposition is false for cones spanned by measured foliations with fewer indecomposable components, since these cannot accumulate onto foliations with a larger number of indecomposable components in the strong topology. By the same argument, it follows from \Cref{pr:denseorbitsimplex} that the number of indecomposable components of any measured foliation on $S_{g,p}$ is bounded above by the cardinality of a pants decomposition, which is $3g+p-3$.
\end{rem}

It remains to apply the above result to prove that the mapping class group orbit of the simplex $B(Q_X^\gamma)$ is dense among Busemann points. One slight issue is that most mapping classes do not preserve the basepoint $X$, so we cannot apply \Cref{walsh_Busemann} directly, but this is easy to overcome.

We first recall the definition of modular equivalence from Walsh's paper.

\begin{defn}
Two quadratic differentials $q$ and $q'$ on possibly different surfaces in $\Teich(S)$ are \emph{modularly equivalent} if there are indecomposable foliations $G_j$ and constants $C,a_j,a_j' >0$ such that $V(q)=\sum_j a_j G_j$, $V(q')=\sum_j a_j G_j$, and
\[
\frac{a_j}{i(G_j,H(q))}=C\frac{a_j'}{i(G_j,H(q'))}
\]
for every $j$.
\end{defn}

Walsh proved in \cite[Theorem 4]{walsh2019asympgeom} that if $X,Y \in \Teich(S)$, $\phi\in Q^1(X)$, and $\psi \in Q^1(Y)$, then $B(\phi)=B(\psi)$ if and only if $\phi$ and $\psi$ are modularly equivalent. Moreover, every modular equivalence class of quadratic differentials of unit area contains a unique representative at any point in Teichm\"uller space \cite[Theorem 5]{walsh2019asympgeom}. 
We use these results to change basepoints for our simplices.

\begin{lem}\label{co:imageMCG}
Let \(G\in \MF(S)\) be a measured foliation with indecomposable components \(G_j\). For \(X\in \Teich(S)\), denote by \(Q_X^G \subset Q^1(X)\) the set of all quadratic differentials of unit area on \(X\) whose vertical foliation is a positive linear combination of the $G_j$. Then $B(Q_X^G) = B(Q_Y^G)$ for every $X,Y \in \Teich(S)$. In particular, if \(f\in \MCG(S)\), then
\[
f\left(B\left(Q_X^G\right)\right)=B\left( f\left(Q_X^G\right)\right)=B\left(Q_{f(X)}^{f(G)}\right) = B\left(Q_X^{f(G)}\right).
\]
\end{lem}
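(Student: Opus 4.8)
The statement to prove is Lemma \ref{co:imageMCG}, which asserts two things: first, that $B(Q_X^G) = B(Q_Y^G)$ for all $X, Y \in \Teich(S)$ (i.e., the Busemann simplex depends only on the foliation data $G$ and not on the basepoint), and second, a compatibility of this with the mapping class group action. The natural tool is Walsh's characterization of when two Busemann points coincide, namely \cite[Theorem 4]{walsh2019asympgeom}: $B(\phi) = B(\psi)$ iff $\phi$ and $\psi$ are modularly equivalent, together with the existence-and-uniqueness statement \cite[Theorem 5]{walsh2019asympgeom} that each modular equivalence class has a unique unit-area representative at any prescribed point of Teichm\"uller space.

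The plan for the first equality is as follows. I would fix $X$ and $Y$ and show the two sets $B(Q_X^G)$ and $B(Q_Y^G)$ coincide by proving each is contained in the other; by symmetry it suffices to prove one inclusion. Take any $\phi \in Q_X^G$, so $V(\phi) = \sum_j a_j G_j$ for some $a_j > 0$. The indecomposable components of $V(\phi)$ are exactly the $a_j G_j$, and the modular equivalence class of $\phi$ is governed by the ratios $a_j/i(G_j, H(\phi))$. By \cite[Theorem 5]{walsh2019asympgeom}, there is a unique unit-area $\psi$ on $Y$ in the same modular equivalence class as $\phi$; by the definition of modular equivalence its vertical foliation is $\sum_j a_j G_j$ (the same indecomposable components with the same weights), so $\psi$ is a positive linear combination of the $G_j$ and hence $\psi \in Q_Y^G$. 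Then $B(\phi) = B(\psi)$ by \cite[Theorem 4]{walsh2019asympgeom}, giving $B(\phi) \in B(Q_Y^G)$. This shows $B(Q_X^G) \subseteq B(Q_Y^G)$, and the reverse inclusion is identical with the roles of $X$ and $Y$ swapped.

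For the displayed chain of equalities with $f \in \MCG(S)$, I would argue one equality at a time. The first, $f(B(Q_X^G)) = B(f(Q_X^G))$, is equivariance of the Busemann map under the mapping class group action on the horoboundary, which follows because $\MCG(S)$ acts by isometries of the Teichm\"uller metric and isometries carry geodesic rays to geodesic rays, hence commute with the Busemann construction $B$. The second equality $B(f(Q_X^G)) = B(Q_{f(X)}^{f(G)})$ is the observation that pushing forward by $f$ sends a unit-area quadratic differential on $X$ with vertical foliation a positive combination of the $G_j$ to a unit-area quadratic differential on $f(X)$ with vertical foliation the corresponding positive combination of the $f(G_j)$; since $f$ preserves area, intersection numbers, and indecomposability, it sets up a bijection $Q_X^G \to Q_{f(X)}^{f(G)}$, so applying $B$ to both sides gives the claim. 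The final equality $B(Q_{f(X)}^{f(G)}) = B(Q_X^{f(G)})$ is just the basepoint-independence established in the first part, applied to the foliation $f(G)$ with the two basepoints $f(X)$ and $X$.

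**Main obstacle.**
The only genuine subtlety is the first part: verifying carefully that the unit-area representative $\psi$ on $Y$ produced by \cite[Theorem 5]{walsh2019asympgeom} really has its vertical foliation supported on the \emph{same} indecomposable foliations $G_j$, rather than merely on some modularly-equivalent reweighting that might a priori involve different building blocks. This is exactly what the definition of modular equivalence guarantees (the $G_j$ and their coefficients in $V$ are shared between the two differentials, only the ratios to $i(G_j, H(\cdot))$ are rescaled), so it is a matter of reading off the definition rather than a real difficulty; once this is pinned down, everything else is bookkeeping with the equivariance of $B$.
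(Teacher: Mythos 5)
Your proof is correct and follows essentially the same route as the paper: basepoint-independence via Walsh's modular-equivalence theorems (the unique unit-area representative on $Y$ has vertical foliation a positive combination of the same $G_j$, hence lies in $Q_Y^G$), and the displayed chain via equivariance of $B$ under the isometric action of the mapping class group. The only nitpick is that modular equivalence does not force the \emph{same} coefficients $a_j$ on the $G_j$ (only that both vertical foliations are positive combinations of them), but this does not affect the argument since membership in $Q_Y^G$ requires only positivity of the coefficients.
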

\begin{proof}
Let $X,Y \in \Teich(S)$, let \(q\in Q_X^G\), and let $q' \in Q^1(Y)$ be modularly equivalent to $X$. By definition of modular equivalence, $V(q')$ is a positive linear combination of the $G_j$, so  $q'\in Q_Y^G$. Furthermore, $B(q) = B(q')$, so that $B\left(Q_X^G\right) \subseteq B\left(Q_Y^G\right)$. The reverse inclusion follows by switching $X$ and $Y$.

If $f\in \MCG(S)$, then $f\circ B = B \circ f$ since $f$ sends any geodesic ray and its limit point to a geodesic ray and its limit point since the mapping class group acts by isometries (which extend continuously to the horofunction compactification). Then the equality $f\left(Q_X^G\right)=Q_{f(X)}^{f(G)}$ simply follows from the action of the mapping class group on quadratic differentials, and by the previous paragraph $B\left(Q_{f(X)}^{f(G)}\right) = B\left(Q_X^{f(G)}\right)$.
\end{proof}

We now have all the ingredients to prove that Busemann points are nowhere dense in the horoboundary.
\begin{proof}[Proof of \Cref{th:maintheorem}]
Let \(\gamma\) the weighted pair of pants decomposition and $G \subset \Teich(S_{g,p})$ the totally geodesic subset from \Cref{co:outsideclosure}. Then let $X \in G$ and let \(\psi \in Q^1(X)\). We want to show that $B(\psi)$ is in the closure of the orbit of $B(Q_X^\gamma)$.

By \Cref{pr:denseorbitsimplex}, there exist sequences $f_n \in \MCG(S_{g,p})$ and $\gamma_n \in C(\gamma)$ (the set of reweighings of $\gamma$) such that \(F_n:=f_n(\gamma_n)\) converges strongly to $V(\psi)$. Let $\widetilde{\psi}_n \in Q(X)$ be such that $V\left(\widetilde{\psi}_n\right)=F_n$ and let $\psi_n = \widetilde{\psi}_n / \left\|\widetilde{\psi}_n \right\|$. Since the Hubbard--Masur map is a homeomorphism, $\widetilde{\psi}_n \to \psi$ as $n \to \infty$. In particular, $V(\psi_n) = F_n / \left\|\widetilde{\psi}_n\right\|$ converges strongly to $V(\psi)$ because $\left\|\widetilde{\psi}_n\right\| \to 1$. By Walsh's \Cref{walsh_Busemann}, $B(\psi_n) \to B(\psi)$ as $n \to \infty$.

Since \(\gamma_n \in C(\gamma)\), we have that $Q_X^{\gamma_n} = Q_X^\gamma$ for every $n$. Furthermore, by \Cref{co:imageMCG} we have
\[ f_n\left(B\left(Q_X^{\gamma}\right)\right)=f_n\left(B\left(Q_X^{\gamma_n}\right)\right)=B\left(Q_X^{f_n(\gamma_n)}\right)=B\left(Q_X^{F_n}\right).\]
As $\psi_n \in Q_X^{F_n}$, this means that for each $n$ there is a \(\zeta_n\in B\left(Q_X^{\gamma}\right)\) such that \(f_n(\zeta_n)=B(\psi_n)\). By \Cref{pr:simplexoutsideclosure}, for each $n$, \(\zeta_n\) is not in the interior of \(\Bclose\). Now \(f_n(\Bclose^\circ)= \Bclose^\circ\) because $f_n$ acts by homeomorphisms, so the same is true for \(f_n(\zeta_n)=B(\psi_n)\), and hence the same is true for the limit $B(\psi)$ of this sequence.

Since all the Busemann points are attainable through rays from any basepoint \cite[Theorem 6]{walsh2019asympgeom}, this shows that $\calB$ is disjoint from $\Bclose^\circ$, hence $\Bclose$ is disjoint from $\Bclose^\circ$, which means that $\Bclose^\circ$ is empty since it is a subset of $\Bclose$. In other words, Busemann points are nowhere dense in the horoboundary.
\end{proof}

\bibliographystyle{amsalpha}
\bibliography{BpointsNowhereDense}{}	

\newcommand{\etalchar}[1]{$^{#1}$}
\providecommand{\bysame}{\leavevmode\hbox to3em{\hrulefill}\thinspace}
\providecommand{\MR}{\relax\ifhmode\unskip\space\fi MR }
\providecommand{\MRhref}[2]{%
  \href{http://www.ams.org/mathscinet-getitem?mr=#1}{#2}
}
\providecommand{\href}[2]{#2}
\begin{thebibliography}{FBMGVP21}

\bibitem[Aze24]{azemar2021qualitative}
A.~Azemar, \emph{A qualitative description of the horoboundary of the
  {T}eichm\"uller metric}, Algebr. Geom. Topol. \textbf{24} (2024), no.~7,
  3919--3984. \MR{4840385}

\bibitem[BH99]{BridsonHaefliger}
M.~R. Bridson and A.~Haefliger, \emph{Metric spaces of non-positive curvature},
  Grundlehren der mathematischen Wissenschaften [Fundamental Principles of
  Mathematical Sciences], vol. 319, Springer-Verlag, Berlin, 1999. \MR{1744486}

\bibitem[CCF{\etalchar{+}}18]{toy}
Y.~Chen, R.~Chernov, M.~Flores, M.~Fortier~Bourque, S.~Lee, and B.~Yang,
  \emph{Toy {T}eichm\"uller spaces of real dimension 2: the pentagon and the
  punctured triangle}, Geom. Dedicata \textbf{197} (2018), 193--227.
  \MR{3876303}

\bibitem[FB23]{fortier2019Divergent}
M.~Fortier~Bourque, \emph{A divergent horocycle in the horofunction
  compactification of the {T}eichm\"uller metric}, Ann. Inst. Fourier
  (Grenoble) \textbf{73} (2023), no.~5, 1885--1902. \MR{4655380}

\bibitem[FBMGVP21]{bolza}
M.~Fortier~Bourque, D.~Martinez-Granado, and F.~Vargas~Pallette, \emph{The
  extremal length systole of the {B}olza surface}, preprint,
  \href{https://arxiv.org/abs/2105.03871}{arXiv:2105.03871}, 2021.

\bibitem[FBR18]{FBRafi}
M.~Fortier~Bourque and K.~Rafi, \emph{Non-convex balls in the {T}eichm\"uller
  metric}, J. Differential Geom. \textbf{110} (2018), no.~3, 379--412.
  \MR{3880229}

\bibitem[FLP12]{FLP}
A.~Fathi, F.~Laudenbach, and V.~Po\'enaru, \emph{Thurston's work on surfaces},
  Mathematical Notes, vol.~48, Princeton University Press, Princeton, NJ, 2012,
  Translated from the 1979 French original by Djun M. Kim and Dan Margalit.
  \MR{3053012}

\bibitem[FM12]{FarbMargalit}
B.~Farb and D.~Margalit, \emph{A primer on mapping class groups}, Princeton
  Mathematical Series, vol.~49, Princeton University Press, Princeton, NJ,
  2012. \MR{2850125}

\bibitem[GM91]{gardmasur1991extrgeom}
F.~P. Gardiner and H.~Masur, \emph{Extremal length geometry of
  {T}eichm\"{u}ller space}, Complex Variables Theory Appl. \textbf{16} (1991),
  no.~2-3, 209--237. \MR{1099913}

\bibitem[GM20]{GekhtmanMarkovic}
D.~Gekhtman and V.~Markovic, \emph{Classifying complex geodesics for the
  {C}arath\'eodory metric on low-dimensional {T}eichm\"uller spaces}, J. Anal.
  Math. \textbf{140} (2020), no.~2, 669--694. \MR{4093921}

\bibitem[Gro81]{Gromov}
M.~Gromov, \emph{Hyperbolic manifolds, groups and actions}, Riemann surfaces
  and related topics: {P}roceedings of the 1978 {S}tony {B}rook {C}onference
  ({S}tate {U}niv. {N}ew {Y}ork, {S}tony {B}rook, {N}.{Y}., 1978), Ann. of
  Math. Stud., vol. No. 97, Princeton Univ. Press, Princeton, NJ, 1981,
  pp.~183--213. \MR{624814}

\bibitem[HM79]{HubbardMasur}
J.~Hubbard and H.~Masur, \emph{Quadratic differentials and foliations}, Acta
  Math. \textbf{142} (1979), no.~3-4, 221--274. \MR{523212}

\bibitem[HT22]{HatcherThurston}
A.~Hatcher and W.~Thurston, \emph{A presentation for the mapping class group of
  a closed orientable surface}, Collected works of {W}illiam {P}. {T}hurston
  with commentary. {V}ol. {I}. {F}oliations, surfaces and differential
  geometry, Amer. Math. Soc., Providence, RI, [2022] \copyright2022, Reprint of
  [0579573], pp.~457--473. \MR{4554450}

\bibitem[Hub06]{Hubbard}
J.~H. Hubbard, \emph{Teichm\"uller theory and applications to geometry,
  topology, and dynamics. {V}ol. 1}, Matrix Editions, Ithaca, NY, 2006,
  Teichm\"uller theory, With contributions by Adrien Douady, William Dunbar,
  Roland Roeder, Sylvain Bonnot, David Brown, Allen Hatcher, Chris Hruska and
  Sudeb Mitra, With forewords by William Thurston and Clifford Earle.
  \MR{2245223}

\bibitem[Ker80]{Kerckhoff}
S.~P. Kerckhoff, \emph{The asymptotic geometry of {T}eichm\"uller space},
  Topology \textbf{19} (1980), no.~1, 23--41. \MR{559474}

\bibitem[KMN06]{KarlssonEtAl}
A.~Karlsson, V.~Metz, and G.~A. Noskov, \emph{Horoballs in simplices and
  {M}inkowski spaces}, Int. J. Math. Math. Sci. (2006), Art. ID 23656, 20.
  \MR{2268510}

\bibitem[KN09]{KleinNicas1}
T.~Klein and A.~Nicas, \emph{The horofunction boundary of the {H}eisenberg
  group}, Pacific J. Math. \textbf{242} (2009), no.~2, 299--310. \MR{2546714}

\bibitem[KN10]{KleinNicas2}
\bysame, \emph{The horofunction boundary of the {H}eisenberg group: the
  {C}arnot-{C}arath\'eodory metric}, Conform. Geom. Dyn. \textbf{14} (2010),
  269--295. \MR{2738530}

\bibitem[LM10]{LenzhenMasur}
A.~Lenzhen and H.~Masur, \emph{Criteria for the divergence of pairs of
  {T}eichm\"uller geodesics}, Geom. Dedicata \textbf{144} (2010), 191--210.
  \MR{2580426}

\bibitem[LS14]{LiuSu}
L.~Liu and W.~Su, \emph{The horofunction compactification of the
  {T}eichm\"uller metric}, Handbook of {T}eichm\"uller theory. {V}ol. {IV},
  IRMA Lect. Math. Theor. Phys., vol.~19, Eur. Math. Soc., Z\"urich, 2014,
  pp.~355--374. \MR{3289706}

\bibitem[Mas75]{Masur}
H.~Masur, \emph{On a class of geodesics in {T}eichm\"uller space}, Ann. of
  Math. (2) \textbf{102} (1975), no.~2, 205--221. \MR{385173}

\bibitem[Min96]{Minsky}
Y.~N. Minsky, \emph{Extremal length estimates and product regions in
  {T}eichm\"uller space}, Duke Math. J. \textbf{83} (1996), no.~2, 249--286.
  \MR{1390649}

\bibitem[Miy14]{Miyachi}
H.~Miyachi, \emph{Extremal length boundary of the {T}eichm\"uller space
  contains non-{B}usemann points}, Trans. Amer. Math. Soc. \textbf{366} (2014),
  no.~10, 5409--5430. \MR{3240928}

\bibitem[MW95]{MasurWolf}
H.~A. Masur and M.~Wolf, \emph{Teichm\"uller space is not {G}romov hyperbolic},
  Ann. Acad. Sci. Fenn. Ser. A I Math. \textbf{20} (1995), no.~2, 259--267.
  \MR{1346811}

\bibitem[Rie02]{Rieffel}
M.~A. Rieffel, \emph{Group {$C^*$}-algebras as compact quantum metric spaces},
  Doc. Math. \textbf{7} (2002), 605--651. \MR{2015055}

\bibitem[Str66]{strebel1966modular}
K.~Strebel, \emph{\"{U}ber quadratische {D}ifferentiale mit geschlossenen
  {T}rajektorien und extremale quasikonforme {A}bbildungen}, Festband 70.
  {G}eburtstag {R}. {N}evanlinna, Springer, Berlin-New York, 1966,
  pp.~105--127. \MR{209470}

\bibitem[Wal07]{WalshMinkowski}
C.~Walsh, \emph{The horofunction boundary of finite-dimensional normed spaces},
  Math. Proc. Cambridge Philos. Soc. \textbf{142} (2007), no.~3, 497--507.
  \MR{2329698}

\bibitem[Wal08]{WalshHilbert}
\bysame, \emph{The horofunction boundary of the {H}ilbert geometry}, Adv. Geom.
  \textbf{8} (2008), no.~4, 503--529. \MR{2456635}

\bibitem[Wal14]{WalshThurston}
\bysame, \emph{The horoboundary and isometry group of {T}hurston's {L}ipschitz
  metric}, Handbook of {T}eichm\"uller theory. {V}ol. {IV}, IRMA Lect. Math.
  Theor. Phys., vol.~19, Eur. Math. Soc., Z\"urich, 2014, pp.~327--353.
  \MR{3289705}

\bibitem[Wal19]{walsh2019asympgeom}
\bysame, \emph{The asymptotic geometry of the {T}eichm\"{u}ller metric}, Geom.
  Dedicata \textbf{200} (2019), 115--152. \MR{3956189}

\end{thebibliography}

\end{document}